\crefname{hypothesis}{Hypothesis}{Hypotheses}
\title{Shape Optimization by Constrained First-Order System Least Mean Approximation\thanks{Submitted to the editors DATE.}}
\author{Gerhard Starke\thanks{Fakult\"at f\"ur Mathematik, Universit\"at Duisburg-Essen, 45117 Essen, Germany (\email{gerhard.starke@uni-due.de}).}}
\newcommand{\tr}{{\rm tr}}
\newcommand{\id}{{\rm id}}
\renewcommand{\div}{{\rm div}}
\newcommand{\R}{\mathds{R}}
\newcommand{\Z}{\mathds{Z}}
\newcommand{\cA}{{\cal A}}
\newcommand{\cS}{{\cal S}}
\newcommand{\cT}{{\cal T}}
\begin{document}

\maketitle

\begin{abstract}
  In this work, the problem of shape optimization, subject to PDE constraints, is reformulated as an $L^p$ best approximation problem under divergence constraints
  to the shape tensor introduced by Laurain and Sturm in \cite{LauStu:16}. More precisely, the main result of this paper states that the $L^p$ distance
  of the above approximation problem is equal to the dual norm of the shape derivative considered as a functional on $W^{1,p^\ast}$ (where $1/p + 1/p^\ast = 1$).
  This implies that for any given shape, one can evaluate its distance from being a stationary one with respect to the shape derivative by simply solving the
  associated $L^p$-type least mean approximation problem. Moreover, the Lagrange multiplier for the divergence constraint turns out to be the shape deformation of
  steepest descent. This provides a way, as an alternative to the approach by Deckelnick, Herbert and Hinze \cite{DecHerHin:22}, for computing shape gradients in
  $W^{1,p^\ast}$ for $p^\ast \in ( 2 , \infty )$. The discretization of the least mean approximation problem is done with (lowest-order) matrix-valued Raviart-Thomas finite
  element spaces leading to piecewise constant approximations of the shape deformation acting as Lagrange multiplier. Admissible deformations in $W^{1,p^\ast}$ to be
  used in a shape gradient iteration are reconstructed locally. Our computational results confirm that the $L^p$ distance of the best approximation does indeed measure
  the distance of the considered shape to optimality. Also confirmed by our computational tests are the observations from \cite{DecHerHin:22} that choosing $p^\ast$
  (much) larger than 2 (which means that $p$ must be close to 1 in our best approximation problem) decreases the chance of encountering mesh degeneracy during the
  shape gradient iteration.
\end{abstract}

\begin{keywords}
  shape derivative, shape gradient, Sturm-Laurain shape tensor, constrained first-order system least mean, $L^p$ approximation
\end{keywords}

\begin{MSCcodes}
  65N30, 49M05
\end{MSCcodes}

\section{Introduction}
\label{sec-introduction}

The treatment of shape optimization problems have been of concern in the numerical analysis community for some decades already and has recently received
much renewed interest. The state of the art in the field of shape and topology optimization was recently summarized in the excellent survey paper by Allaire,
Dapogny and Jouve \cite{AllDapJou:21}. Much recent research was inspired by the observation that the representation of the shape derivative as a volume
integral not only requires less regularity of the considered shapes but is also advantageous from the view of numerical approximation in finite element spaces
\cite{Ber:10,HipPagSar:15, LauStu:16}. Among the essential contributions based on the volume expression of the shape derivative are also: \cite{SchSieWel:16}, where
a suitable Steklov-Poincar{\'e} metric is developed, \cite{EigStu:18}, where {\color{black} reproducing kernel} Hilbert spaces are used for its evaluation,
\cite{AlbLauYou:21}
for application to time-domain full waveform inversion, and \cite{LauWinYou:21} for application to high-temperature superconductivity. The study of shape optimization
problems under convexity constraints in \cite{BarWac:20} also relies on the volume representation of the shape derivative. Finally, two recent contributions address
the crucial issue of avoiding the degeneration of the meshes during an iterative procedure based on a gradient representation of the shape derivative. In
\cite{EtlHerLoaWac:20} this is achieved by a suitable restriction of the admissible shape deformations to those meaningful by the Hadamard structure theorem.
A different route is taken in \cite{DecHerHin:22} and \cite{DecHerHin:23a}, where the Hilbert space setting is left and shape gradients are instead computed in
$W^{1,p^\ast}$ for $p^\ast > 2$ (ideally for $p^\ast \rightarrow \infty$, i.e., in the Lipschitz topology).

In this paper, the problem of finding an optimal shape with respect to some shape functional and subject to PDE constraints is reformulated as an $L^p$ best
approximation problem in a space of matrix-valued functions with
divergence constraints. The function to be approximated coincides with the shape expression in tensorial form introduced by Laurain and Sturm \cite{LauStu:16}
and studied further by Laurain et.al. in \cite{Lau:20} and \cite{LauLopNak:23}. The main result (Theorem 3.3) states that, for $1 < p < \infty$, the $L^p$ distance
to the Sturm-Laurain shape tensor described above is equal to
to the dual norm of the shape derivative, considered as a functional on $W^{1,p^\ast}$ (with $1/p + 1/p^\ast = 1$) and that the Lagrange multiplier associated
with the divergence constraint is the direction of steepest descent among the shape deformations. The first and immediate implication is that for any given shape,
one can evaluate its distance from optimality (or, more precisely, from having a stationary shape derivative) by simply solving the corresponding $L^p$
best approximation problem. The second implication is that this provides an alternative way to compute shape gradients in $W^{1,p^\ast}$ for $p^\ast > 2$ which is
different from the approach developed in \cite{DecHerHin:22}, \cite{DecHerHin:23a} {\color{black} and \cite{DecHerHin:23b}. These contributions all build upon
the earlier work \cite{MueKueSieDecHinRun:21} utilizing the $p$-Laplace operator in a fluid mechanical context, see also \cite{MuePinRunSie:23}.}

The above-mentioned $L^p$-type least mean approximation problem can be viewed as a generalization of a constrained first-order system least squares formulation,
investigated, for example, in \cite{AdlVas:14} (see also \cite{CaiSta:04}). Therefore, our approach is named ``constrained first-order system least mean'', also
reminding us of the fact that our ultimate goal is to allow $p \rightarrow 1$ (and, thus, $p^\ast \rightarrow \infty$). The finite element approximation of the shape tensor
is done in matrix-valued Raviart-Thomas spaces.
We restrict ourselves to lowest-order since higher-order approximations would also require a higher-order parametrization
of the boundaries and associated parametric Raviart-Thomas spaces \cite{BerSta:16}. The shape deformation arising as Lagrange multiplier for the divergence
constraint is naturally approximated by piecewise constants in order to satisfy the discrete inf-sup condition. From this piecewise constant approximation an admissible
deformation in $W^{1,p^\ast}$ needs to be reconstructed which is possible to do in a local manner without much effort by the procedures in \cite{Ste:91} or
\cite{ErnVoh:15}. 

The structure of this paper is as follows: In the following section we present the classical shape optimization problem constrained by the Poisson equation
with Dirichlet conditions and reformulate it as a least mean approximation problem subject to divergence constraints. Section \ref{sec-structure} investigates
the structure of the necessary conditions for the constrained best approximation problem and presents the main result about the equivalence to the dual norm
of the shape derivative and the steepest descent property of the Lagrange multiplier. The discretization of the best approximation problem in suitable finite
element spaces is the subject of Section \ref{sec-discretization}. Section \ref{sec-shape_gradient_iteration} describes the reconstruction of an admissible
deformation from the piecewise constant approximation of the Lagrange multiplier and uses this to develop an iterative scheme for the computation of the
optimal shape. A simple test example with a known optimal shape in form of a disk is used to illustrate the performance of the shape gradient iteration. This
example is already introduced in the previous sections in order to illustrate the theoretical results. A more sophisticated second example with a non-convex
(and unknown) optimal shape is also used for numerical computations. {\color{black} A third example from the literature with a non-convex optimal shape
where the barycenter is not known before-hand from symmetry considerations complements our numerical experiments.}
Since the functional, given by the $L^p$ best approximation and measuring the distance to stationarity, contains several error components associated with the
geometry and finite element approximations, a detailed analysis is finally carried out in Section \ref{sec-error_analysis}.

\section{Shape optimality as a problem of $L^p$ best approximation}
\label{sec-best_approximation}

We are concerned with PDE-constrained shape optimization among an appropriate set of admissible shapes $\Omega \subset \R^d$ ($d = 2$ or $3$)
associated with minimizing a shape functional
\begin{equation}
  J (\Omega) = \int_\Omega j (u_\Omega) \: dx \: ,
  \label{eq:shape_functional}
\end{equation}
where $u_\Omega \in H_0^1 (\Omega)$ is the solution of the Dirichlet problem
\begin{equation}
  ( \nabla u_\Omega , \nabla v )_{L^2 (\Omega)} = ( f , v )_{L^2 (\Omega)} \mbox{ for all } v \in H_0^1 (\Omega) \: .
  \label{eq:bvp}
\end{equation}
Gradients of scalar functions like $\nabla u_\Omega \in \R^d$ are defined as column vectors. However, for gradients of vector functions like
$\nabla \theta \in \R^{d \times d}$, each row is the gradient of a component of $\theta \in \R^d$. 
From now on, $( \cdot , \cdot )$ always denotes the duality pairing $( L^p (\Omega) , L^{p^\ast} (\Omega) )$ with $1/p + 1/p^\ast = 1$ ($1 < p < \infty$).
For Lipschitz domains and under suitable assumptions on $f$ and $j ( \: \cdot \: )$ (see below), the shape derivative
{\color{black} (in direction $\chi \in W^{1,\infty} (\Omega;\R^d)$)} exists and is given by
\begin{equation}
  \begin{split}
    J^\prime (\Omega) [ \chi ]
    & = \left( \left( (\div \: \chi) \: I - \left( \nabla \chi + (\nabla \chi)^T \right) \right) \nabla u_\Omega , \nabla y_\Omega \right) \\
    & \hspace{4cm} + \left( f \: \nabla y_\Omega , \chi \right) + \left( j (u_\Omega) , \div \: \chi \right) \: ,
  \end{split}
  \label{eq:shape_derivative_pre}
\end{equation}
where $y_\Omega \in H_0^1 (\Omega)$ denotes the solution of the adjoint problem
\begin{equation}
  ( \nabla y_\Omega , \nabla z ) = - ( j^\prime (u_\Omega) , z ) \mbox{ for all } z \in H_0^1 (\Omega)
  \label{eq:adjoint_bvp}
\end{equation}
(cf. \cite[Prop. 4.5]{AllDapJou:21}). Note that this is well-defined {\color{black} for $j : H_0^1 (\Omega) \rightarrow L^1 (\Omega)$, $f \in L^2 (\Omega)$ and}
if $j$ is differentiable with {\color{black} $j^\prime (u) \in L^2 (\Omega)$ for all $u \in H_0^1 (\Omega)$}. We reformulate this for our purposes as
\begin{equation}
  J^\prime (\Omega) [ \chi ] = \left( K ( u_\Omega ,  y_\Omega ) , \nabla \chi \right)
  + \left( f \: \nabla y_\Omega , \chi \right) + \left( j (u_\Omega) , \div \: \chi \right)
  \label{eq:shape_derivative}
\end{equation}
with
\begin{equation}
  K ( u_\Omega , y_\Omega ) = \left( \nabla u_\Omega \cdot \nabla y_\Omega \right) \: I
  - \nabla y_\Omega \otimes \nabla u_\Omega - \nabla u_\Omega \otimes \nabla y_\Omega \: ,
  \label{eq:definition_K}
\end{equation}
where $\otimes$ stands for the outer product. This is exactly the shape expression in tensorial form studied by Laurain and Sturm in \cite{LauStu:16}.
Therefore, we call it Sturm-Laurain shape tensor.
Since, in general, $\nabla u_\Omega$ and $\nabla y_\Omega$ can only be guaranteed to be in $L^2 ( \Omega ; \R^{d \times d} )$, only
$K ( u_\Omega , y_\Omega ) \in L^1 (\Omega ; \R^{d \times d})$ will hold. This implies that, without additional regularity assumptions, $J^\prime (\Omega) [\chi]$
is only defined for $\chi \in W^{1,\infty} (\Omega;\R^d)$.

For the admissible domains we consider the set
\begin{equation}
  \cS = \{ \Omega = (\id + \theta) \Omega_0 : \theta , (\id + \theta)^{-1} - \id \in W^{1,\infty} (\Omega;\R^d) \} \: ,
  \label{eq:set_of_shapes}
\end{equation}
where $\id$ denotes the identity mapping in $\R^d$ and $\Omega_0$ is a fixed reference domain, e.g. the unit disk in $\R^2$ or unit ball in $\R^3$.
In view of (\ref{eq:shape_derivative}) we are looking for a domain $\Omega \in \cS$ such that
\begin{equation}
  ( K ( u_\Omega , y_\Omega ) , \nabla \chi ) + ( j (u_\Omega) , \div \: \chi ) + ( f \: \nabla y_\Omega , \chi ) = 0
  \mbox{ for all } \chi \in W^{1,\infty} (\Omega ; \R^d)
  \label{eq:shape_variational_problem}
\end{equation}
is satisfied for the Sturm-Laurain shape tensor. This is equivalent to finding $\Omega \in \cS$ such that
\begin{equation}
  \begin{array}{rl}
    ( \div \: K ( u_\Omega , y_\Omega ) - f \: \nabla y_\Omega , \chi ) - ( j (u_\Omega) , \div \: \chi ) & = 0 \: , \\
    \langle K ( u_\Omega , y_\Omega ) \cdot n , \chi \rangle & = 0
    \label{eq:shape_optimality_system}
  \end{array}
\end{equation}
holds for all $\chi \in W^{1,\infty} (\Omega ; \R^d)$, where, from now on, $\langle \cdot , \cdot \rangle$ denotes the corresponding duality pairing on the boundary
$\partial \Omega$.

Furthermore, (\ref{eq:shape_optimality_system}) may be reformulated as determining $\Omega \in \cS$ such that, for some $p \in ( 1 , \infty )$ (only $p \in ( 1, 2 ]$ is
of practical interest), there exists an
\[
  S \in \Sigma^p := \{ T \in L^p (\Omega ; \R^{d \times d}) : \div \: T \in L^p (\Omega ; \R^d) \}
\]
with
\begin{equation}
  \begin{array}{rl}
    ( \div \: S , \chi ) & = ( f \: \nabla y_\Omega , \chi ) + ( j (u_\Omega) , \div \: \chi ) \mbox{ for all } \chi \in W^{1,\infty} (\Omega ; \R^d) \: , \\
    \langle S \cdot n , \left. \chi \right|_{\partial \Omega} \rangle & = 0 \hspace{3.95cm} \mbox{ for all } \chi \in W^{1,\infty} (\Omega ; \R^d) \: , \\
    S - K ( u_\Omega , y_\Omega ) & = 0 \: .
  \end{array}
  \label{eq:shape_optimality_system_first_order}
\end{equation}
Motivated by (\ref{eq:shape_optimality_system_first_order}), the use of
\begin{equation}
  \begin{split}
    \eta_p (\Omega) = \inf \{ & \| T - K ( u_\Omega , y_\Omega ) \|_{L^p (\Omega)} : T \in \Sigma^p \mbox{ satisfying } \\
    & \hspace{0.1cm} ( \div \: T , \chi ) = ( f \: \nabla y_\Omega , \chi ) + ( j (u_\Omega) , \div \: \chi )
    \:\: \forall \: \chi \in W^{1,p^\ast} (\Omega ; \R^d) \: , \\
    & \hspace{0.1cm} \langle T \cdot n , \left. \chi \right|_{\partial \Omega} \rangle = 0 \:\: \forall \: \chi \in W^{1,p^\ast} (\Omega;\R^d) \}
  \end{split}
  \label{eq:l_p_functional}
\end{equation}
is proposed, with $1/p + 1/p^\ast = 1$, as an estimator for shape optimality (or, more precisely, shape stationarity) of $\Omega$. In other words, the
$L^p$ distance of the Sturm-Laurain shape tensor $K ( u_\Omega , y_\Omega )$ to the admissible subset of $\Sigma^{p,0}$ measures how far the shape
$\Omega$ is from being a stationary one. In order to make sure that ({\ref{eq:l_p_functional}) is well-defined, the linear functional
\begin{equation}
  F (\chi) := ( f \: \nabla y_\Omega , \chi ) + ( j (u_\Omega) , \div \: \chi )
  \label{eq:constraint_functional}
\end{equation}
needs to be bounded on $W^{1,p^\ast} (\Omega;\R^d)$. This is certainly guaranteed if we assume
$f \in L^\infty (\Omega)$ and $j (u_\Omega) \in L^2 (\Omega)$ which we will tacitly do from now on. For brevity of notation, let us define
\begin{equation}
  \Sigma^{p,0} = \{ T \in \Sigma^p : \langle T \cdot n , \left. \chi \right|_{\partial \Omega} \rangle = 0 \mbox{ for all } \chi \in W^{1,p^\ast} (\Omega ; \R^d) \} \: .
\end{equation}

\begin{theorem}
  Let $p \in ( 1 , 2 ]$ and assume that $K ( u_\Omega , y_\Omega ) \in L^p ( \Omega ; \R^{d \times d} )$. If
  the compatibility condition $( f \: \nabla y_\Omega , e ) = 0$ for all constant $e \in \R^d$ is fulfilled, then
  there is a uniquely determined $S \in \Sigma^{p,0}$ satisfying the constraint
  \begin{equation}
      ( \div \: S , \chi ) = ( f \: \nabla y_\Omega , \chi ) + ( j (u_\Omega) , \div \: \chi ) \mbox{ for all } \chi \in W^{1,p^\ast} ( \Omega;\R^d ) \: ,
    \label{eq:constraints_l_p_minimization}
  \end{equation}
  such that the infimum in (\ref{eq:l_p_functional}) is attained, i.e.,
  \begin{equation}
    \eta_p (\Omega) = \| S - K ( u_\Omega , y_\Omega ) \|_{L^p (\Omega)} \: .
    \label{eq:unique_S}
  \end{equation}
  \label{theorem-unique_S}
\end{theorem}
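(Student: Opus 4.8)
The plan is to recognize \eqref{eq:l_p_functional} as a best-approximation problem for the fixed element $K(u_\Omega,y_\Omega)$ from a closed, convex (indeed affine) subset of the reflexive, uniformly convex space $L^p(\Omega;\R^{d\times d})$ ($1<p<\infty$), obtaining existence by the direct method and uniqueness from strict convexity of the $L^p$-norm. The first structural observation I would record is that the feasible set
\[
  \cA = \{ T \in \Sigma^{p,0} : (\div\, T,\chi) = F(\chi) \ \forall\, \chi \in W^{1,p^\ast}(\Omega;\R^d) \}
\]
is an affine translate $\cA = T_0 + \mathcal{N}$ of the homogeneous space $\mathcal{N} = \{ Z \in \Sigma^{p,0} : (\div\, Z,\chi)=0 \ \forall\,\chi \}$. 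Testing with $\chi\in C_c^\infty$ shows that any $Z\in\mathcal N$ has $\div\, Z=0$, so that $\mathcal N = \{ Z\in L^p : (Z,\nabla\chi)=0 \ \forall\,\chi\in W^{1,p^\ast} \}$ is an intersection of kernels of $L^p$-continuous functionals and hence closed in $L^p$; consequently $\cA$ is closed and convex in $L^p$. In particular $\div\, T$ is the \emph{same} for every $T\in\cA$, which is the point that rescues coercivity: although only $\|T-K\|_{L^p}$ and not the full graph norm is being minimized, a minimizing sequence is automatically bounded in $L^p$ while its divergence stays fixed, so no control is lost.

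The crux is showing $\cA\neq\emptyset$, and this is exactly where the compatibility condition enters. I would argue that $(f\,\nabla y_\Omega,e)=0$ for all constant $e\in\R^d$ is precisely the condition that makes the constraint solvable: if $\nabla\chi_1=\nabla\chi_2$ then $\chi_1-\chi_2$ is constant on the connected domain, and $F(\chi_1)-F(\chi_2)=F(\chi_1-\chi_2)=(f\,\nabla y_\Omega,\chi_1-\chi_2)=0$, so $F$ factors through the map $\chi\mapsto\nabla\chi$. By the Poincar\'e--Wirtinger inequality a representative can be chosen with $\|\chi\|_{W^{1,p^\ast}}\lesssim\|\nabla\chi\|_{L^{p^\ast}}$, whence $G=\nabla\chi\mapsto -F(\chi)$ is a bounded functional on the subspace $\nabla(W^{1,p^\ast})\subset L^{p^\ast}(\Omega;\R^{d\times d})$. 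Extending it by Hahn--Banach and using $(L^{p^\ast})^\ast = L^p$ yields a matrix field $T_0\in L^p(\Omega;\R^{d\times d})$ with $-(T_0,\nabla\chi)=F(\chi)$ for all $\chi$, i.e. the prescribed divergence action together with the vanishing normal-trace condition built into $\Sigma^{p,0}$. The delicate point to verify here is the membership $\div\, T_0\in L^p$, equivalently that the distributional identity $\div\, T_0 = \div\, K = f\,\nabla y_\Omega - j^\prime(u_\Omega)\nabla u_\Omega$ delivers an $L^p$ function; I expect this, rather than the abstract minimization, to be the main obstacle, and it is the place where the standing integrability hypotheses on $f$ and $j(u_\Omega)$ together with the assumption $K(u_\Omega,y_\Omega)\in L^p$ must be invoked.

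With feasibility in hand, existence and uniqueness follow routinely. For a minimizing sequence $(T_n)\subset\cA$ one has $\|T_n-K\|_{L^p}\to\eta_p(\Omega)$, hence $(T_n)$ is bounded in $L^p$; reflexivity gives a weakly convergent subsequence $T_n\rightharpoonup S$, and since $\cA$ is convex and closed in $L^p$ it is weakly closed, so $S\in\cA$. Weak lower semicontinuity of the norm then yields $\|S-K\|_{L^p}\le\liminf_n\|T_n-K\|_{L^p}=\eta_p(\Omega)$, so the infimum is attained. Uniqueness is a consequence of the strict convexity of $L^p$ for $1<p<\infty$: if two minimizers $S_1,S_2$ existed, their average would lie in $\cA$ and satisfy $\|\tfrac12(S_1+S_2)-K\|_{L^p}<\eta_p(\Omega)$ unless $S_1-K$ and $S_2-K$ coincide, forcing $S_1=S_2$.
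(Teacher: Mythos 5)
Your proposal is architecturally correct and coincides with the paper's proof in two of its three steps: existence via convex minimization over a non-empty closed affine feasible set, and uniqueness via strict convexity of $L^p$ (the paper's ``strictly normed'' argument). In fact you supply more detail than the paper does on existence, spelling out the direct method (boundedness of a minimizing sequence, reflexivity, weak closedness of the convex feasible set, weak lower semicontinuity of the norm), where the paper contents itself with one sentence; your characterization of the homogeneous space $\mathcal{N}$ through the single weak identity $(Z,\nabla\chi)=0$ for all $\chi\in W^{1,p^\ast}(\Omega;\R^d)$ is also correct and clean. The genuine difference lies in the feasibility step. The paper is constructive: it solves the auxiliary vector-valued Neumann problem (\ref{eq:auxiliary_Neumann_problem}) in $H^1(\Omega;\R^d)$ --- solvable because the compatibility condition makes the right-hand side vanish on constants --- and takes $\bar{S}=\nabla\psi\in L^2\hookrightarrow L^p$; this is exactly where $p\leq 2$ and the boundedness of $\Omega$ enter, and the constraint for $\chi\in W^{1,p^\ast}\subset H^1$ follows a fortiori. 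You instead factor $F$ through $\chi\mapsto\nabla\chi$ (the compatibility condition playing precisely the same role), bound it via Poincar\'e--Wirtinger, and extend by Hahn--Banach in the natural $L^{p^\ast}$--$L^p$ duality. Your route is non-constructive but does not need $p\leq 2$ at this step; the paper's $\bar{S}$ is essentially your $T_0$ with Hahn--Banach replaced by Riesz representation in the Hilbert setting. (One small omission: the implication ``$\nabla\chi_1=\nabla\chi_2$ forces $\chi_1-\chi_2$ constant'' requires $\Omega$ connected; otherwise the compatibility condition must be imposed componentwise.)

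The point you flag but do not close --- whether $\div\, T_0\in L^p(\Omega;\R^d)$, i.e.\ whether $T_0$ actually lies in $\Sigma^p$ --- is a real one, and it is worth being precise about why: from $-(T_0,\nabla\chi)=F(\chi)$ one obtains only the distributional identity $\div\, T_0 = f\,\nabla y_\Omega - \nabla j(u_\Omega)$, and under the standing assumption $j(u_\Omega)\in L^2(\Omega)$ alone the distributional gradient $\nabla j(u_\Omega)$ need not lie in $L^p$, in which case the feasible set as literally defined would be empty. You should know, however, that the paper's own proof glosses over exactly the same point: the Neumann solution satisfies $\Delta\psi = f\,\nabla y_\Omega - \nabla j(u_\Omega)$ only distributionally, and the assertion that $\bar{S}=\nabla\psi$ lies in $H(\div,\Omega;\R^{d\times d})$ with vanishing normal trace tacitly splits the single variational identity into the two constraints --- legitimate precisely when $j(u_\Omega)\in W^{1,p}(\Omega)$ (so that the divergence is an $L^p$ function, $f\,\nabla y_\Omega\in L^2\hookrightarrow L^p$ taking care of the other term) and $j(u_\Omega)$ has vanishing boundary trace, as in the paper's examples where $j(u_\Omega)=u_\Omega/2\in H_0^1(\Omega)$, or when the constraints are read in the lumped weak form $-(T,\nabla\chi)=F(\chi)$, cf.\ the reformulation (\ref{eq:constraints_l_p_minimization_modified}). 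So to complete your argument, either add this regularity of $j(u_\Omega)$ explicitly, or observe that membership in $\Sigma^{p,0}$ together with both constraints is equivalent to the single identity $-(T,\nabla\chi)=F(\chi)$ plus $\div\, T\in L^p$, and that --- as you yourself note --- the divergence is the \emph{same} fixed distribution for every feasible $T$, so its $L^p$-membership is a property of the data, not of the particular representative; once it holds for one feasible point it holds for all, and the rest of your proof goes through verbatim.
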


\begin{proof}
  We start by showing that the admissible set defined by the constraints (\ref{eq:constraints_l_p_minimization}) is not empty. To this end, write
  $\bar{S} = \nabla \psi$ where $\psi \in H^1 (\Omega ; \R^d)$ solves the variational problem
  \begin{equation}
      ( \nabla \psi , \nabla \chi ) = - ( f \: \nabla y_\Omega , \chi ) - ( j (u_\Omega) , \div \: \chi ) \mbox{ for all } \chi \in H^1 (\Omega;\R^d) \: .
    \label{eq:auxiliary_Neumann_problem}
  \end{equation}
  Due to the compatibility condition, (\ref{eq:auxiliary_Neumann_problem}) has a solution which is unique up to constant vectors in $\R^d$.
  From integration by parts, we obtain
  \begin{equation}
      \langle \bar{S} \cdot n , \left. \chi \right|_{\partial \Omega} \rangle - ( \div \: \bar{S} , \chi )
      = - ( f \: \nabla y_\Omega , \chi ) - ( j (u_\Omega) , \div \: \chi ) \mbox{ for all } \chi \in H^1 (\Omega;\R^d) \: .
  \end{equation}
  This proves that $\bar{S} \in H (\div,\Omega;\R^{d \times d})$ is also contained in $\Sigma^{p,0}$  and satisfies the constraint in (\ref{eq:constraints_l_p_minimization})
  and is therefore admissible.

  Since $\| T - K ( u_\Omega , y_\Omega ) \|_{L^p (\Omega)}$ is convex with respect to $T$, (\ref{eq:l_p_functional}) defines a convex minimization problem
  {\color{black} in the non-empty closed subspace defined by the constraints (\ref{eq:constraints_l_p_minimization})}  
  and therefore possesses at least one solution.

  Uniqueness follows from the fact that the space $L^p ( \Omega ; \R^{d \times d} )$ is strictly normed for $1 < p < \infty$ (see e.g. \cite[Theorem 3.3.21]{AtkHan:09}).
\end{proof}

\begin{remark}
  The compatibility condition
  \begin{equation}
    ( f \: \nabla y_\Omega , e ) = 0 \mbox{ for constant } e \in \R^d
    \label{eq:compatibility_condition}
  \end{equation}
  means that, due to (\ref{eq:shape_derivative}), $J^\prime (\Omega) [ e ] = 0$ for all translations $e \in \R^d$. The geometric interpretation is
  that moving its center of gravity does not lead to a reduction of the shape functional. The condition (\ref{eq:compatibility_condition}) will be satisfied in all
  the examples in this paper since the origin will always be the center of gravity of the optimal domain. In the general case, if the compatibility condition is not
  satisfied automatically, our approach needs to be modified by adding an additional step which computes the constant displacement deformations.
\end{remark}

\begin{remark}
  If we make the stronger assumption that $j (u_\Omega) \in H^1 (\Omega)$, then the constraints in (\ref{eq:l_p_functional}) can be rewritten as
  \begin{equation}
    \begin{split}
      ( \div \: S , \chi ) & = ( f \: \nabla y_\Omega , \chi ) - ( \nabla j (u_\Omega) , \chi ) \mbox{ for all } \chi \in W^{1,p^\ast} ( \Omega;\R^d ) \: , \\
      \langle S \cdot n , \left. \chi \right|_{\partial \Omega} \rangle & = \langle j (u_\Omega) , \left. \chi \cdot n \right|_{\partial \Omega} \rangle
      \mbox{ for all } \chi \in W^{1,p^\ast} ( \Omega;\R^d ) \: .
    \end{split}
    \label{eq:constraints_l_p_minimization_modified}
  \end{equation}
  These constraints may be evaluated for more general, possibly discontinuous, test functions $\chi$ which will be important for the finite element
  discretization in Section \ref{sec-discretization}.
\end{remark}

{\em Example 1 (Circular optimal shape).}
\label{example-circle}
We illustrate the behavior of the computed values for $\eta_{p,h} (\Omega_h)$ for a very simple test problem where the optimal shape is a disk.
To this end, set $f = 1/2 - {\bf 1}_D$, where ${\bf 1}_D$ denotes the characteristic function of the unit disk $D \subset \R^2$ and
$j (u_\Omega) \equiv u_\Omega/2$ (with $j^\prime (u_\Omega) \equiv 1/2$). The solution of the boundary value problem (\ref{eq:bvp}) on
$D_R = \{ x \in \R^2 : | x | < R \}$ for $R > 1$ is given by
\begin{equation}
  u_R (x) = \left\{ \begin{array}{ll}
    \frac{R^2 + | x |^2}{8} - \frac{1}{4} - \frac{1}{2} \: \ln R & \: , \: 0 \leq | x | < 1 \: , \\
    \frac{R^2 - | x |^2}{8} + \frac{1}{2} ( \ln | x | - \ln R ) & \: , \: 1 < | x | < R \: ,
  \end{array} \right.
\end{equation}
while the solution of (\ref{eq:adjoint_bvp}) is simply $y_R (x) = (| x |^2 - R^2)/8$. This implies
\begin{equation}
  \nabla u_R (x) = \left\{ \begin{array}{ll}
    \frac{1}{4} \: x  & \: , \: 0 \leq | x | < 1 \: , \\
    - \frac{1}{4} \: x + \frac{1}{2} \: \frac{x}{| x |^2} & \: , \: 1 < | x | < R \: ,
  \end{array} \right. \mbox{ and }
  \nabla y_R (x) = \frac{1}{4} x
\end{equation}
leading to 
\begin{equation}
  \begin{split}
    K ( u_R , y_R ) & = \left( \nabla u_R \cdot \nabla y_R \right) I - \nabla y_R \otimes \nabla u_R - \nabla u_R \otimes \nabla y_R \\
    & = \left\{ \begin{array}{ll}
      \frac{1}{16} | \id |^2 I - \frac{1}{8} \id \otimes \id & \: , \: 0 \leq | x | < 1 \: , \\
      \left( \frac{1}{8} - \frac{1}{16} | \id |^2 \right) I - \left( \frac{1}{4 \: | \id |^2} - \frac{1}{8} \right) \id \otimes \id & \: , \: 1 < | x | < R
    \end{array} \right.
  \end{split}
\end{equation}
as explicit expression for the Sturm-Laurain shape tensor. This implies
\begin{equation}
  \begin{split}
    \div \: K ( u_R , y_R ) = & \left\{ \begin{array}{ll}
      - \frac{1}{4} \id & \: , \: 0 \leq | x | < 1 \: , \\[0.05cm]
      \frac{1}{4} \id - \frac{1}{4 \: | \id |^2} \: \id & \: , \: 1 < | x | < R 
    \end{array} \right\} = f \: \nabla y_R - \nabla j (u_R) \: , \\
    K ( u_R , y_R ) \cdot n & = \left( \frac{1}{8} - \frac{R^2}{16} \right) n - \left( \frac{1}{4} - \frac{R^2}{8} \right) n \mbox{ on } \partial D_R \: ,
  \end{split}
  \label{eq:constraints_K}
\end{equation}
which means that (\ref{eq:shape_optimality_system}) is satisfied for $R = \sqrt{2}$. The direct calculation of the shape functional for disks with radius $R \geq 1$
using polar coordinates gives
\begin{equation}
  \begin{split}
    J (D_R) = \frac{1}{2} \int_{D_R} u_R \: dx
    & = \pi \int_0^1 \left( \frac{R^2 + r^2}{8} - \frac{1}{4} - \frac{1}{2} \: \ln R \right) \: r \: dr \\
    & \hspace{1.5cm} + \pi \int_1^R \left( \frac{R^2 - r^2}{8} + \frac{1}{2} (\ln r - \ln R) \right) \: r \: dr \\
    & = \frac{\pi}{32} \left( R^4 - 4 R^2 + 2 \right) = \frac{\pi}{32} \left( R^2 - 2 \right)^2 - \frac{\pi}{16} \: ,
  \end{split}
\end{equation}
which confirms that the minimum value is $J (D_{\sqrt{2}}) = - \pi / 16$.
  
For disks $D_R$, it is possible to calculate $\eta_p (D_R)$ by hand as follows. Recall that our problem consists in finding the $L^p (D_R)$ best approximation
$S \in \Sigma^p$ subject to
\begin{equation}
  \begin{split}
    \div \: S & = f \: \nabla u_R - \nabla j (u_R) = \left\{ \begin{array}{ll}
      - \frac{1}{4} \id & \: , \: 0 \leq | x | < 1 \: , \\[0.05cm]
      \frac{1}{4} \id - \frac{1}{4 \: | \id |^2} \: \id & \: , \: 1 < | x | < R 
    \end{array} \right. \: , \\
    S \cdot n & = j (u_R) \: n = 0 \: .
  \end{split}
  \label{eq:constraints_S}
\end{equation}
Combining (\ref{eq:constraints_K}) and (\ref{eq:constraints_S}) leads to
\[
  \div \: ( S - K ( u_R , y_R ) ) = 0 \mbox{ and } ( S - K ( u_R , y_R ) ) \cdot n = \left( \frac{1}{8} - \frac{R^2}{16} \right) \: n \: .
\]
These two constraints are obviously fulfilled by setting
\begin{equation}
  S - K ( u_R , y_R ) = \left( \frac{1}{8} - \frac{R^2}{16} \right) \: I
  \label{eq:ex1_radius_optimal_choice}
\end{equation}
and we show that (\ref{eq:ex1_radius_optimal_choice}) is the optimal choice. To this end, we calculate the Fr\'{e}chet derivative
\begin{equation}
  \begin{split}
    D_S \left( \| S - K ( u_R , y_R ) \|_{L^p}^p \right) [ T ] & = p \: ( | S - K ( u_R , y_R ) |^{p-2} ( S - K ( u_R , y_R ) ) , T ) \\
    & = p \: d^{p/2-1} \: \left| \frac{1}{8} - \frac{R^2}{16} \right|^{p-2} \: \left( \frac{1}{8} - \frac{R^2}{16} \right) \: ( I , T ) \: .
  \end{split}
\end{equation}
From $( I , T ) = ( \nabla \id , T ) = \langle \id , T \cdot n \rangle - ( \id , \div \: T )$
we get$D_S \left( \| S - K ( u_R , y_R ) \|_{L^p}^p \right) [ T ] = 0$ for all $T \in \Sigma^p$ with $\div \: T = 0$ in $D_R$ and $T \cdot n = 0$ on $\partial D_R$
proving optimality. Therefore, we get
\begin{equation}
  \eta_p (D_R) = \left| \frac{1}{8} - \frac{R^2}{16} \right| \: \| I \|_{L^p (D_R;\R^{2 \times 2})} = \frac{\left| 2 - R^2 \right|}{16} \sqrt{2} \left( \pi R^2 \right)^{1/p} \: .
  \label{eq:eta_disk_R}
\end{equation}
For more general domains $\Omega$, we will compute finite element approximations $\eta_{p,h} (\Omega_h)$ for $\eta_p (\Omega)$ in Section
\ref{sec-discretization}.

\section{The structure of the constrained least mean approximation problem}
\label{sec-structure}

The minimization problem associated with the determination of (\ref{eq:l_p_functional}) is a convex minimization problem in $\Sigma^{p,0}$. For $1 < p < \infty$ it is
differentiable and the associated KKT conditions for $S \in \Sigma^{p,0}$ being a minimizer are
\begin{equation}
  \begin{split}
    ( | S - K (u_\Omega,y_\Omega) |^{p-2} (S - K (u_\Omega,y_\Omega)) , T ) + ( \div \: T , \theta ) & = 0 \: , \\
    ( \div \: S , \chi ) - ( f \: \nabla y_\Omega , \chi ) - ( j (u_\Omega) , \div \: \chi ) & = 0
  \end{split}
  \label{eq:KKT_2}
\end{equation}
for all $T \in \Sigma^{p,0}$ and $\chi \in W^{1,p^\ast} (\Omega;\R^d)$ with a Lagrange multiplier $\theta \in W^{1,p^\ast} (\Omega;\R^d)$.

{\color{black}
\begin{remark}
  The proof of the next theorem relies on the validity of a Helmholtz-type decomposition $L^{p^\ast} (\Omega;\R^d) = G^{p^\ast} (\Omega) \oplus H_{p^\ast} (\Omega)$
  into a subspace of gradients
  \begin{equation}
    G_{p^\ast} (\Omega) = \{ \nabla \vartheta : \vartheta \in W^{1,p^\ast} (\Omega) \}
  \end{equation}
  and a subspace $H_{p^\ast} (\Omega)$ of divergence-free functions with zero trace. Such a decomposition is known to exist for all $p^\ast \in ( 1 , \infty )$
  for domains $\Omega$ with $C^2$ boundary (cf. \cite{FujMor:77}, \cite[Sect. III.1]{Gal:11}). It is also known that this is not true for general Lipschitz domains
  $\Omega$, in which case it is proved to be true for $3/2 \leq p^\ast \leq 3$ (see \cite[Sect. 11]{FabMenMit:98}). The existence of such a Helmholtz-type
  decomposition is related to the solvability of a specific Neumann boundary value problem for the Poisson equation which, for domains with corners, depends
  on the size of the interior angles. In our context, this Helmholtz decomposition is applied individually for each row of a matrix-valued function in
  $L^p (\Omega;\R^{d \times d})$.
  \label{remark-Helmholtz}
\end{remark}
}

\begin{theorem}
  Let $p \in ( 1 , 2 ]$ {\color{black} be such that $L^{p^{\ast}} (\Omega;\R^{d \times d})$ possesses a Helmholtz decomposition in the sense of Remark
  \ref{remark-Helmholtz}} and assume that $K (u_\Omega,y_\Omega) \in L^p (\Omega;\R^{d \times d})$ as well as
  $f \in L^\infty (\Omega)$ and $j (u_\Omega) \in L^2 (\Omega)$ hold. Moreover, assume that the compatibility condition
  $( f \: \nabla y_\Omega , e ) = 0$ for all constant $e \in \R^d$ is fulfilled.
  Then, (\ref{eq:KKT_2}) has a solution
  \[
    (S,\theta) \in \Sigma^{p,0} \times W^{1,p^\ast} (\Omega;\R^d) \;\;\; ( 1/p + 1/p^\ast = 1 ) \: . 
  \]
  $S \in \Sigma^{p,0}$ is the unique minimizer of (\ref{eq:l_p_functional}).
  $\theta \in W^{1,p^\ast} (\Omega;\R^d)$ is unique up to additive constant vectors. 
  \label{eq:saddle_point_problem_p}
\end{theorem}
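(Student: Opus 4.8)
The plan is to take the unique minimizer $S \in \Sigma^{p,0}$ already furnished by Theorem~\ref{theorem-unique_S} and to produce the multiplier $\theta$ directly from a Helmholtz decomposition of the duality map of the residual, rather than through an abstract saddle-point (Brezzi) argument, which is unavailable here because the first block of \eqref{eq:KKT_2} is nonlinear in $S$. Abbreviating $K := K(u_\Omega,y_\Omega)$ and writing $P := |S-K|^{p-2}(S-K)$, the hypothesis $K \in L^p$ gives $S - K \in L^p$ and hence $P \in L^{p^\ast}(\Omega;\R^{d\times d})$ with $\|P\|_{L^{p^\ast}} = \|S-K\|_{L^p}^{p-1}$, so that $P$ is a legitimate element on which to apply the assumed decomposition. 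The second equation of \eqref{eq:KKT_2} is just the constraint \eqref{eq:constraints_l_p_minimization}, already satisfied by $S$, so all of the work lies in the first equation.

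First I would record the first-order optimality condition. Since $S$ minimizes the convex functional $T \mapsto \tfrac1p\|T-K\|_{L^p}^p$, differentiable for $1<p<\infty$ as noted above, over the affine set cut out by \eqref{eq:constraints_l_p_minimization}, its derivative must annihilate the tangent space $V := \{T \in \Sigma^{p,0} : (\div T,\chi)=0 \text{ for all } \chi \in W^{1,p^\ast}\}$, i.e. $(P,T)=0$ for all $T\in V$. Because $W^{1,p^\ast}$ is dense in $L^{p^\ast}$ and $\div T \in L^p$, the condition defining $V$ upgrades to $\div T = 0$; together with the zero normal trace built into $\Sigma^{p,0}$ this identifies $V$ with the row-wise divergence-free, zero-normal-trace space $H_p(\Omega;\R^{d\times d})$.

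The heart of the argument is then to show $P \in G_{p^\ast}$, that is, $P = \nabla\theta$ for some $\theta \in W^{1,p^\ast}(\Omega;\R^d)$. Decomposing row-wise, $P = \nabla\theta + H$ with $\nabla\theta \in G_{p^\ast}$ and $H \in H_{p^\ast}$. For $T \in H_p$ one has $(\nabla\theta,T)=0$ (gradients pair to zero with divergence-free, zero-trace fields after integration by parts) while $(P,T)=0$ by optimality, hence $(H,T)=0$ on $H_p$; on the other hand $(H,\nabla\phi)=0$ for every $\nabla\phi\in G_p$ since $H$ is itself divergence-free with vanishing normal trace. Invoking the decomposition $L^p = G_p \oplus H_p$ dual to the assumed one then gives $(H,T)=0$ for all $T\in L^p$, so $H=0$ and $P=\nabla\theta$. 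For any $T\in\Sigma^{p,0}$, integration by parts together with the zero normal trace yields $(P,T)=(\nabla\theta,T)=-(\div T,\theta)$, which is exactly the first line of \eqref{eq:KKT_2}. More conceptually, this whole step is the annihilator identity $(H_p)^\circ = G_{p^\ast}$, and this is precisely the content of the Helmholtz hypothesis in Remark~\ref{remark-Helmholtz}. I expect this identification to be the main obstacle, and it is the sole reason the theorem must restrict to exponents for which the decomposition is available (all $p^\ast\in(1,\infty)$ for $C^2$ domains, $3/2\le p^\ast\le 3$ for general Lipschitz domains).

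Uniqueness of $S$ is inherited from Theorem~\ref{theorem-unique_S}. For uniqueness of $\theta$ up to constants, suppose $\theta$ and $\theta'$ both solve \eqref{eq:KKT_2}; subtracting and setting $\vartheta := \theta - \theta'$ gives $(\div T,\vartheta)=0$, equivalently $(T,\nabla\vartheta)=0$ (again using the zero normal trace), for all $T\in\Sigma^{p,0}$. Since $\Sigma^{p,0}$ contains $C^\infty_c(\Omega;\R^{d\times d})$, which is dense in $L^p$, and $\nabla\vartheta \in L^{p^\ast}=(L^p)'$, this forces $\nabla\vartheta=0$, so $\vartheta$ is constant on the connected domain $\Omega$. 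This completes the plan.
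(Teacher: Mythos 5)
Your proposal is correct and takes essentially the same route as the paper's proof: it starts from the unique minimizer $S$ furnished by Theorem~\ref{theorem-unique_S}, observes that $P = |S-K|^{p-2}(S-K) \in L^{p^\ast}$ annihilates the solenoidal zero-normal-trace fields, identifies $P = \nabla\theta$ via the Helmholtz decomposition of $L^{p^\ast}(\Omega;\R^{d\times d})$, and recovers the first equation of (\ref{eq:KKT_2}) by integration by parts, with uniqueness of $\theta$ up to constants following as you describe. The one place you add (rather than deviate from) the paper's argument is the annihilator step $(H_p)^\circ = G_{p^\ast}$, for which you invoke the dual decomposition $L^p = G_p \oplus H_p$ not literally contained in the hypothesis; this is harmless, since boundedness of the Helmholtz projection on $L^{p^\ast}$ yields the $L^p$ decomposition by duality (consistent with the conjugate-symmetric exponent range quoted in Remark~\ref{remark-Helmholtz}), whereas the paper simply cites the $L^{p^\ast}$ decomposition directly at this point.
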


\begin{proof}
  From Theorem \ref{theorem-unique_S} we deduce that there is a unique $S \in \Sigma^{p,0}$ which minimizes (\ref{eq:l_p_functional}). Thus, it satisfies
  the necessary condition
  \begin{equation}
    ( | S - K (u_\Omega,y_\Omega) |^{p-2} (S - K (u_\Omega,y_\Omega)) , T ) = 0
    \label{eq:necessary_subspace}
  \end{equation}
  for all $T \in \Sigma^{p,0}_{\rm sol} = \{ Z \in \Sigma^{p,0} : \div \: Z = 0 \}$. Therefore, the Helmholtz decomposition (for each row) in
  $L^{p^\ast} (\Omega;\R^{d \times d})$ (see Remark \ref{remark-Helmholtz}) states that
  \begin{equation}
    | S - K (u_\Omega,y_\Omega) |^{p-2} (S - K (u_\Omega,y_\Omega)) = \nabla \theta
    \label{eq:multiplier_gradient}
  \end{equation}
  with some $\theta \in W^{1,p^\ast} (\Omega;\R^d)$. Integration by parts leads to the first equation in (\ref{eq:KKT_2}).
  Obviously, $\theta \in W^{1,p^\ast} (\Omega;\R^{d \times d})$ is uniquely determined up to adding arbitrary constant vectors.
\end{proof}

\begin{remark}
  The existence of the Lagrange multiplier can also be deduced from the classical theory of constrained optimization (see e.g. \cite[Section 1.3]{ItoKun:08}).
  In order to apply the theory, it needs to be shown that the divergence operator is surjective as a mapping from
  $\Sigma^{p,0}$ to $\left. L^p (\Omega;\R^d) \right|_{\R^d}$. This is closely related to the Helmholtz decomposition used in the above proof
  (see \cite{ArnScoVog:88}).
\end{remark}

For $p \neq 2$, (\ref{eq:KKT_2}) is a nonlinear problem which needs to be solved in an iterative way. Choosing $p \in ( 1 , 2 )$ close to 1
widens the generality of the problems that can be treated.
Note that if the regularity of the underlying boundary value (constraint) problem is such that
$\{ \nabla u_\Omega , \nabla y_\Omega \} \subset H^s (\Omega;\R^d)$ for some $s > 0$, then by the Sobolev embedding theorem
(see, e.g. \cite[Ch. 7]{Leo:23}) $H^s (\Omega;\R^d) \hookrightarrow L^{2q} (\Omega;\R^d)$ with $1 < q < 1 / (1 - 2 s / d)$ and we also have
$K (u_\Omega,y_\Omega) \in L^q (\Omega;\R^{d \times d})$ with these values of $q$.
In the Hilbert space case $p = 2$, the optimality conditions (\ref{eq:KKT_2}) become linear: Find
$S \in \Sigma^{2,0} = H_{\partial \Omega} (\div,\Omega;\R^{d \times d})$ and $\theta \in H^1 (\Omega;\R^d)$ such that
\begin{equation}
  \begin{split}
    ( S , T ) + ( \div \: T , \theta ) & = ( K (u_\Omega,y_\Omega) , T ) \: , \\
    ( \div \: S , \chi ) & = ( f \: \nabla y_\Omega , \chi ) + ( j (u_\Omega) , \div \: \chi )
  \end{split}
  \label{eq:KKT_3}
\end{equation}
holds for all $T \in H_{\partial \Omega} (\div,\Omega;\R^{d \times d})$ and $\chi \in H^1 (\Omega;\R^d)$.


We now return to the general case $p \in ( 1 , 2 ]$ and show that the functional defined by (\ref{eq:l_p_functional}) is related to the dual norm of $J^\prime (\Omega)$.
Moreover, the Lagrange multiplier $\theta \in W^{1,p^\ast} (\Omega;\R^d)$ constitutes the direction of steepest descent.

\begin{theorem}
  Let $p \in ( 1 , 2 ]$ {\color{black} be such that $L^{p^\ast} (\Omega;\R^{d \times d})$ possesses a Helmholtz decomposition in the sense of Remark
  \ref{remark-Helmholtz}} and assume that $K (u_\Omega,y_\Omega) \in L^p (\Omega;\R^{d \times d})$ as well as
  $f \in L^\infty (\Omega)$ and $j (u_\Omega) \in L^2 (\Omega)$ hold and that the compatibility condition
  $( f \: \nabla y_\Omega , e ) = 0$ for all constant $e \in \R^d$ is fulfilled.
  Then, the Lagrange multiplier $\theta \in W^{1,p^\ast} (\Omega;\R^d)$ from (\ref{eq:KKT_2}) satisfies
  \begin{equation}
    - \frac{J^\prime (\Omega) [\theta]}{\| \nabla \theta \|_{L^{p^\ast} (\Omega;\R^{d \times d})}}
    = \sup_{\chi \in W^{1,p^\ast} (\Omega;\R^d)} \frac{J^\prime (\Omega) [\chi]}{\| \nabla \chi \|_{L^{p^\ast} (\Omega;\R^{d \times d})}} = \eta_p (\Omega) \: .
    \label{eq:inf_shape_functional_pstar}
  \end{equation}
  \label{theorem-inf_shape_functional_pstar}
\end{theorem}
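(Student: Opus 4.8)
The plan is to first collapse the three terms of the shape derivative into a single duality pairing against the minimizer $S$. Writing $J^\prime(\Omega)[\chi] = ( K(u_\Omega,y_\Omega), \nabla\chi ) + F(\chi)$ with $F$ as in (\ref{eq:constraint_functional}), I would use that the minimizer $S \in \Sigma^{p,0}$ from Theorem~\ref{theorem-unique_S} satisfies the constraint (\ref{eq:constraints_l_p_minimization}), so that $F(\chi) = ( \div\, S, \chi )$. Integration by parts, exactly as in the proof of Theorem~\ref{theorem-unique_S}, together with the vanishing normal trace $\langle S\cdot n, \chi|_{\partial\Omega}\rangle = 0$ built into $\Sigma^{p,0}$, then gives $( \div\, S, \chi ) = -( S, \nabla\chi )$. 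Substituting yields the key identity
\[
  J^\prime(\Omega)[\chi] = -\,( S - K(u_\Omega,y_\Omega), \nabla\chi ) \quad\text{for all } \chi \in W^{1,p^\ast}(\Omega;\R^d) .
\]

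With this identity the quotient in (\ref{eq:inf_shape_functional_pstar}) reads $-( S - K(u_\Omega,y_\Omega), \nabla\chi )/\| \nabla\chi \|_{L^{p^\ast}(\Omega)}$, and H\"older's inequality immediately bounds it by $\| S - K(u_\Omega,y_\Omega) \|_{L^p(\Omega)} = \eta_p(\Omega)$. This establishes $\sup_\chi \le \eta_p(\Omega)$ and requires nothing beyond the first identity.

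For the reverse inequality and the two equalities I would invoke the characterization (\ref{eq:multiplier_gradient}), namely $\nabla\theta = |W|^{p-2} W$ with $W := S - K(u_\Omega,y_\Omega)$, established in the proof of Theorem~\ref{eq:saddle_point_problem_p}. This gives $( W, \nabla\theta ) = \int_\Omega |W|^p = \eta_p(\Omega)^p$, and since $|\nabla\theta| = |W|^{p-1}$ and the conjugate exponents satisfy $(p-1)\,p^\ast = p$, also $\| \nabla\theta \|_{L^{p^\ast}(\Omega)} = \eta_p(\Omega)^{p-1}$. Dividing, the first expression in (\ref{eq:inf_shape_functional_pstar}) evaluates to $-J^\prime(\Omega)[\theta]/\| \nabla\theta \|_{L^{p^\ast}(\Omega)} = ( W, \nabla\theta )/\| \nabla\theta \|_{L^{p^\ast}(\Omega)} = \eta_p(\Omega)$, while testing the supremum with $\chi = -\theta$ attains exactly the value $\eta_p(\Omega)$, closing the chain of equalities.

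The only genuinely delicate point is the \emph{attainment}: the H\"older bound is saturated precisely along the direction $|W|^{p-2} W$, and to realize this direction as some $\nabla\chi$ one needs it to be the gradient of a $W^{1,p^\ast}$ field. This is exactly what the Helmholtz decomposition hypothesis of Remark~\ref{remark-Helmholtz} secures through (\ref{eq:multiplier_gradient}); without it, only the upper bound $\sup_\chi \le \eta_p(\Omega)$ would survive. I expect this to be the main obstacle, whereas the integration by parts and the H\"older estimate are routine.
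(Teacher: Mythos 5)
Your proposal is correct and takes essentially the same route as the paper's own proof: the key identity $J^\prime (\Omega) [\chi] = - ( S - K (u_\Omega,y_\Omega) , \nabla \chi )$ obtained from the constraint and integration by parts, the H\"older upper bound, and saturation via $\nabla \theta = | S - K (u_\Omega,y_\Omega) |^{p-2} ( S - K (u_\Omega,y_\Omega) )$ together with the exponent identity $(p-1) p^\ast = p$. Your closing observation also correctly locates the role of the Helmholtz hypothesis, which enters exactly through (\ref{eq:multiplier_gradient}) in Theorem \ref{eq:saddle_point_problem_p} to realize the saturating direction as a gradient.
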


\begin{proof}
  The definition of the shape derivative (\ref{eq:shape_derivative}) together with the second equation in (\ref{eq:KKT_2}) and integration by parts yields
  \begin{equation}
    \begin{split}
      J^\prime (\Omega) [\chi] & = ( K (u_\Omega,y_\Omega) , \nabla \chi ) + ( j (u_\Omega) , \div \: \chi ) + ( f \: \nabla y_\Omega , \chi ) \\
      & = ( K (u_\Omega,y_\Omega) , \nabla \chi ) + ( \div \: S , \chi ) \\
      & = ( K (u_\Omega,y_\Omega) - S , \nabla \chi ) + \langle S \cdot n , \chi \rangle = - ( S - K (u_\Omega,y_\Omega) , \nabla \chi )
    \end{split}
    \label{eq:shape_derivative_S}
  \end{equation}
  for all $\chi \in W^{1,p^\ast} (\Omega;\R^d)$. If we now insert $\theta \in W^{1,p^\ast} (\Omega;\R^d)$ satisfying (\ref{eq:multiplier_gradient})
  into (\ref{eq:shape_derivative_S}), then we obtain
  \begin{equation}
    \begin{split}
      J^\prime (\Omega) [\theta] & = - ( S - K (u_\Omega,y_\Omega) , | S - K (u_\Omega,y_\Omega) |^{p-2} (S - K (u_\Omega,y_\Omega)) ) \\
      & = - \| S - K (u_\Omega,y_\Omega) \|_{L^p (\Omega;\R^{d \times d})}^p \: .
    \end{split}
    \label{eq:numerator_optimal}
  \end{equation}
  On the other hand,
  \begin{equation}
    \begin{split}
      \| \nabla \theta & \|_{L^{p^\ast} (\Omega;\R^d)} = \left( \int_\Omega | \nabla \theta |^{p^\ast} dx \right)^{1/p^\ast}
      = \left( \int_\Omega | S - K (u_\Omega,y_\Omega) |^{(p-1) p^\ast} dx \right)^{1/p^\ast} \\
      & = \left( \int_\Omega | S - K (u_\Omega,y_\Omega) |^p dx \right)^{1/p^\ast} 
      = \| S - K (u_\Omega,y_\Omega) \|_{L^p (\Omega;\R^{d \times d})}^{p/p^\ast} \\
      & = \| S - K (u_\Omega,y_\Omega) \|_{L^p (\Omega;\R^{d \times d})}^{p-1} \: .
    \end{split}
  \end{equation}
  Combining (\ref{eq:numerator_optimal}) and (\ref{eq:denominator_optimal}) with (\ref{eq:shape_derivative_S}) leads to
  \begin{equation}
    \begin{split}
      - \frac{J^\prime (\Omega) [\theta]}{\| \nabla \theta \|_{L^{p^\ast} (\Omega;\R^{d \times d})}}
      & = \| S - K (u_\Omega,y_\Omega) \|_{L^p (\Omega;\R^{d \times d})} \\
      & \geq \sup_{\chi \in W^{1,p^\ast} (\Omega;\R^d)} \frac{( S - K (u_\Omega,y_\Omega) , \nabla \chi )}{\| \nabla \chi \|_{L^{p^\ast} (\Omega;\R^{d \times d})}} \\
      & = \sup_{\chi \in W^{1,p^\ast} (\Omega;\R^d)} \frac{J^\prime (\Omega) [\chi]}{\| \nabla \chi \|_{L^{p^\ast} (\Omega;\R^{d \times d})}}
      \geq - \frac{J^\prime (\Omega) [\theta]}{\| \nabla \theta \|_{L^{p^\ast} (\Omega;\R^{d \times d})}} \: .
    \end{split}
    \label{eq:denominator_optimal}
  \end{equation}
  Obviously, equality needs to hold in (\ref{eq:denominator_optimal}) which completes the proof.
\end{proof}

\begin{remark}
  Theorem \ref{theorem-inf_shape_functional_pstar} states that $\theta$ is the steepest descent direction for the shape functional in the $W^{1,p^\ast} (\Omega)$
  {\color{black}semi-}norm {\color{black} which constitutes a norm in
  \begin{equation}
    \Theta^{p^\ast} := \{ \chi \in W^{1,p^\ast} (\Omega;\R^d) : ( \chi , e ) = 0 \mbox{ for all constant } e \in \R^d \} \: .
  \end{equation}
  }
  For $p^\ast \rightarrow \infty$, this represents another way of accessing the shape gradient with respect to the $W^{1,\infty}$ topology
  recently studied by Deckelnick, Herbert and Hinze in \cite{DecHerHin:22} {\color{black} and \cite{DecHerHin:23a} and the $W^{1,p}$ approach
  investigated in \cite{MueKueSieDecHinRun:21} and \cite{DecHerHin:23a}. Due to the approximation of the shape tensors, our approach is expected
  to be more costly than the more direct ones in \cite{MueKueSieDecHinRun:21} and \cite{DecHerHin:23a}. On the other hand, the supply of the
  error measure $\eta_p (\Omega)$ may be worth the additional effort.}
\end{remark}

{\color{black}
\begin{remark}
  Due to the compatibility condition assumed in Theorem \ref{theorem-inf_shape_functional_pstar}, it follows from (\ref{eq:shape_derivative_pre}) that for
  constant $e \in \R^d$,
  \begin{equation}
    J^\prime (\Omega) [ e ] = ( f \: \nabla y_\Omega , e ) = 0
    \label{eq:shape_derivative_translation}
  \end{equation}
  holds. In order to obtain a unique solution of (\ref{eq:KKT_2}) also for the multiplier $\theta$, it also needs to be constrained, e.g. by requiring $( \theta , e ) = 0$
  for all $e \in \R^d$. Due to (\ref{eq:shape_derivative_translation}) this is consistent with the statement of Theorem \ref{theorem-inf_shape_functional_pstar} since
  the constant deformations, i.e., translations, have no effect on (\ref{eq:inf_shape_functional_pstar}).
\end{remark}
}

\section{Discretization of the least mean approximation problem by finite elements}
\label{sec-discretization}

On a triangulation $\cT_h$ of a polyhedrally bounded approximation $\Omega_h$ of $\Omega$, we use lowest-order Raviart-Thomas ($RT_0$) elements
for the approximation of (each row of) $S$ in the finite-dimensional subspace $\Sigma_h^p \subset \Sigma^p$. Combined with the space
$\Theta_h \: {\color{black} \subset \Theta^{p^\ast}}$ of piecewise
constants {\color{black} (normalized to average zero on $\Omega_h$, see Remark 3.5)} for the (component-wise) approximation of the deformation field
$\theta \in W^{1,p^\ast} (\Omega;\R^d)$, this forms an inf-sup stable combination
(cf. \cite{BofBreFor:13}) for the saddle point problem (\ref{eq:KKT_3}). In order to be able to insert discontinuous test functions, we need to resort to the
alternative formulation (\ref{eq:constraints_l_p_minimization_modified}) of the constraints,
\begin{equation}
  \begin{split}
    ( \div \: S , \chi ) & = ( f \: \nabla y_\Omega , \chi ) - ( \nabla j (u_\Omega) , \chi ) \mbox{ for all } \chi \in L^{p^\ast} ( \Omega;\R^d ) \: , \\
    \langle S \cdot n , \chi_b \rangle & = \langle j (u_\Omega) , \chi_b \cdot n \rangle \mbox{ for all } \chi_b \in L^{p^\ast} ( \partial \Omega;\R^d ) \: ,
  \end{split}
  \label{eq:constraints_l_p_minimization_modified_enlarged}
\end{equation}
where we inserted the enlarged test spaces $L^{p^\ast} ( \Omega;\R^d )$ and $L^{p^\ast} ( \partial \Omega;\R^d )$.

With respect to the formulation (\ref{eq:constraints_l_p_minimization_modified_enlarged}) of the constraints, the necessary conditions for the minimization
problem associated with the computation of $\eta_p (\Omega)$ are now given as finding $S \in \Sigma^p$, such that
\begin{equation}
  \begin{split}
    ( | S - K (u_\Omega,y_\Omega) |^{p-2} (S - K (u_\Omega,y_\Omega)) , T ) + ( \div \: T , \theta ) + \langle T \cdot n , \theta_b \rangle & = 0 \: , \\
    ( \div \: S , \chi ) - ( f \: \nabla y_\Omega , \chi ) + ( \nabla j (u_\Omega) , \chi ) & = 0 \: , \\
    \langle S \cdot n , \chi_b \rangle - \langle j (u_\Omega) , \chi_b \cdot n \rangle & = 0
  \end{split}
  \label{eq:KKT_4}
\end{equation}
for all $T \in \Sigma^p$, $\chi \in L^{p^\ast} (\Omega;\R^d)$ and $\chi_b \in L^{p^\ast} (\partial \Omega;\R^d)$. Here, an additional Lagrange multiplier
$\theta_b$ occurs which can be shown to coincide with the boundary values $\left. -\theta \right|_{\partial \Omega}$. The corresponding discretized
problem reads: Find $S_h \in \Sigma_h^p$, $\theta_h \in \Theta_h$ and $\theta_{b,h} \in \Theta_{b,h}$ such that
\begin{equation}
  \begin{split}
    ( | S_h - K (u_{\Omega,h},y_{\Omega,h}) |^{p-2} (S_h - K (u_{\Omega,h},y_{\Omega,h})) , T_h ) + ( \div \: T_h , \theta_h ) \;\;\;\;\;\; & \\
    + \langle T_h \cdot n , \theta_{b,h} \rangle & = 0 \: , \\
    ( \div \: S_h , \chi_h ) - ( f \: \nabla y_{\Omega,h} , \chi_h ) + ( \nabla j (u_{\Omega,h}) , \chi_h ) & = 0 \: , \\
    \langle S_h \cdot n , \chi_{b,h} \rangle - \langle j (u_{\Omega,h}) , \chi_{b,h} \cdot n \rangle & = 0
  \end{split}
  \label{eq:KKT_h}
\end{equation}
for all $T_h \in \Sigma_h^p$, $\chi_h \in \Theta_h$ and $\chi_b \in \Theta_{b,h}$. As discrete space $\Theta_{b,h} \subset L^{p^\ast} (\partial \Omega;\R^d)$
piecewise constant functions on the boundary triangulation are used. $u_{\Omega,h}$ and $y_{\Omega,h}$ are finite element approximations (piecewise linear,
conforming in our computations) of (\ref{eq:bvp}) and (\ref{eq:adjoint_bvp}), respectively. Solving (\ref{eq:KKT_h}) gives approximate values
$\eta_{p,h} (\Omega_h) = \| S_h - K (u_{\Omega,h},y_{\Omega,h}) \|_{L^p (\Omega_h)}$ for $\eta_p (\Omega)$. For $p = 2$, (\ref{eq:KKT_h}) constitutes
a linear saddle point problem while an iterative procedure needs to be employed for $p < 2$. For the numerical results presented below, the following rather simple
fixed point scheme was used by us: Determine, in step $k$, $S_h^{(k)} \in \Sigma_h^p$, $\theta_h \in \Theta_h$ and $\theta_{b,h} \in \Theta_{b,h}$ such that
\begin{equation}
  \begin{split}
    ( | S_h^{(k-1)} - K (u_{\Omega,h},y_{\Omega,h}) |^{p-2} (S_h^{(k)} - K (u_{\Omega,h},y_{\Omega,h})) , T_h ) + ( \div \: T_h , \theta_h ) \;\;\; & \\
    + \langle T_h \cdot n , \theta_{b,h} \rangle & = 0 \: , \\
    ( \div \: S_h^{(k)} , \chi_h ) - ( f \: \nabla y_{\Omega,h} , \chi_h ) + ( \nabla j (u_{\Omega,h}) , \chi_h ) & = 0 \: , \\
    \langle S_h^{(k)} \cdot n , \chi_{b,h} \rangle - \langle j (u_{\Omega,h}) , \chi_{b,h} \cdot n \rangle & = 0
  \end{split}
  \label{eq:KKT_iterative}
\end{equation}
holds for all $T_h \in \Sigma_h^p$, $\chi_h \in \Theta_h$ and $\chi_b \in \Theta_{b,h}$. For values of $p$ approaching 1, however, this iterative procedure has its
limitations and we will need to investigate more sophisticated solution methods for (\ref{eq:KKT_h}) in the future.

{\em Example 1; continued.}
Now we are able to compute approximations $\eta_{p,h} (\Omega)$ to $\eta_p (\Omega)$ for more general domains $\Omega$ than just the concentric disks
considered in Section \ref{sec-best_approximation}. We investigate the different domains of variable closeness to the optimal shape shown in Figure \ref{fig-shapeex1},
which are all scaled to have the same area $| \Omega | = 2 \pi$ (shared by the optimal shape $D_{\sqrt{2}}$): a square (solid line), an octagon (dashed line)
and a hexadecagon (16 edges, dotted line).

\begin{figure}[h!]
  \hspace{1cm}\includegraphics[scale=0.25]{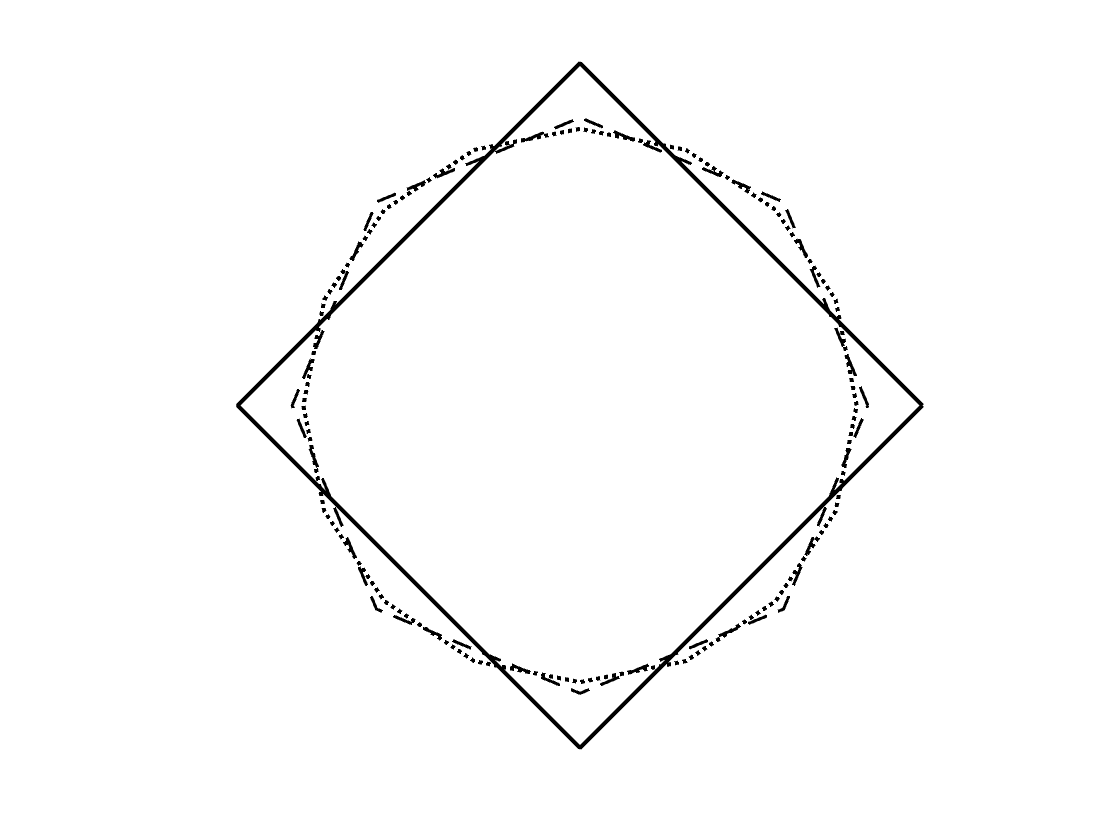}
  \caption{Example 1: Square, octagon, hexadecagon approximating a disk of the same area}
  \label{fig-shapeex1}
\end{figure}

\begin{figure}[h!]
  \hspace{1cm}\includegraphics[scale=0.26]{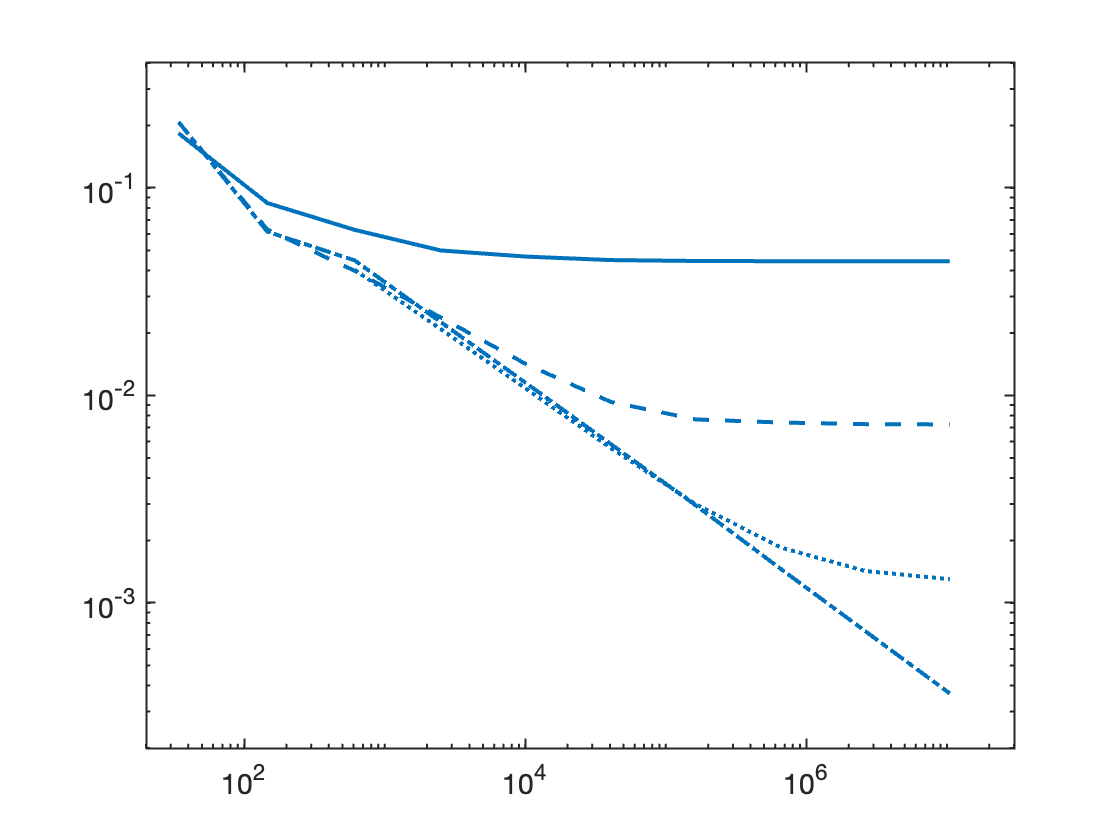}
  \caption{Example 1: Values $\eta_{2,h} (\Omega_h)$ for square, octagon and hexadecagon}
  \label{fig-shapeex1eta2}
\end{figure}

\begin{figure}[h!]
  \hspace{1cm} \includegraphics[scale=0.26]{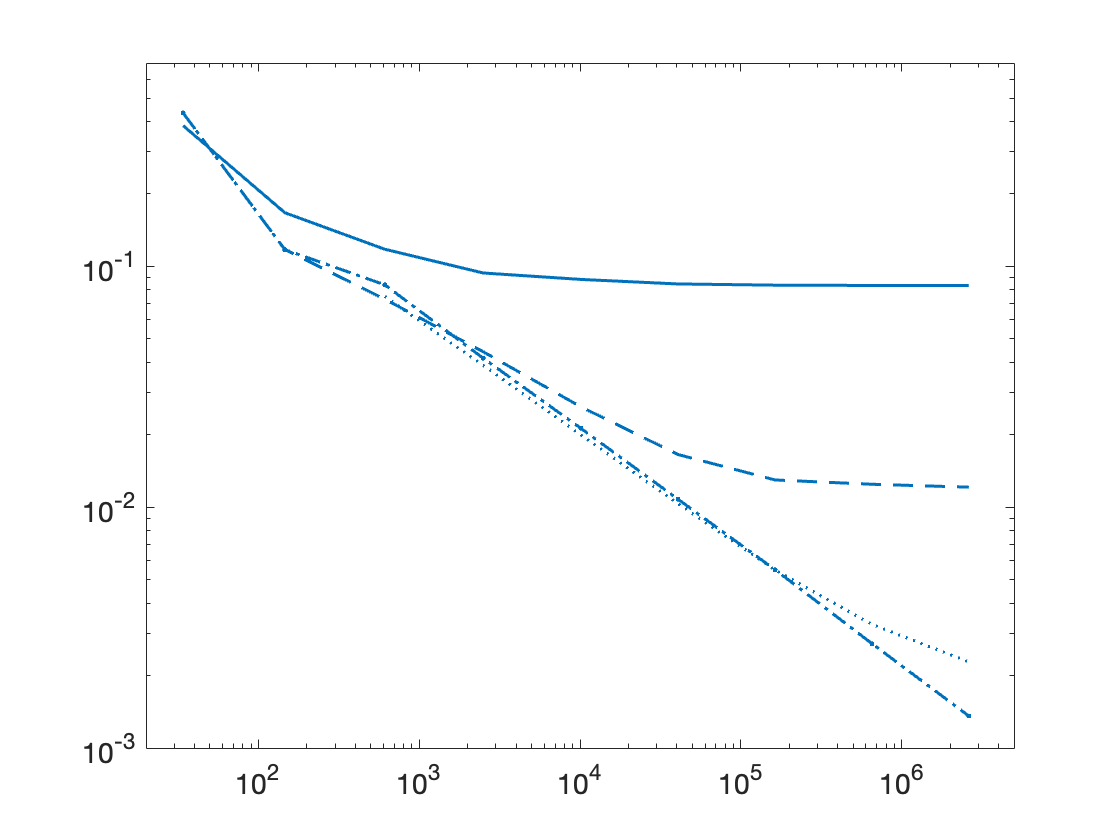}
  \caption{Example 1: Values $\eta_{1.1,h} (\Omega_h)$ for square, octagon and hexadecagon}
  \label{fig-shapeex1eta1point1}
\end{figure}

Figures \ref{fig-shapeex1eta2} and \ref{fig-shapeex1eta1point1} show the behavior of $\eta_{2,h} (\Omega)$ and $\eta_{1.1,h} (\Omega)$, respectively, (vertical axis)
in dependence of the number of elements (horizontal axis) for $p = 2$ and $1.1$ on a sequence of triangulations resulting from uniform refinement. For the square
(solid lines), the functional is reduced somewhat on refined triangulations before it stagnates at its distance to stationarity. For the octagon (dashed lines), the
reduction goes further until it stagnates at a lower value, meaning that it is closer, in the mathematical strict sense of Theorem
\ref{theorem-inf_shape_functional_pstar}, to being a stationary shape. For the hexadecagon (dotted lines), the behavior is repeated with an even smaller
asymptotical value. Finally, the dash-dotted lines show the behavior of $\eta_{2,h} (\Omega_h)$ and $\eta_{1.1,h}$ for polygonal approximations to the optimal disk
$D_{\sqrt{2}}$ tending to zero at a rate proportionally to $h$ (the inverse of the square root of the number of elements). In this case, $\eta_{p,h} (\Omega_h)$ also
contains the geometry error associated with the approximation $\Omega_h$ of $D_{\sqrt{2}}$. For the polygonal domains above, $\Omega_h = \Omega$ holds and
the difference between $\eta_{p,h} (\Omega)$ and $\eta_p (\Omega)$ is due only to the finite element approximations of (\ref{eq:KKT_h}) and of the underlying
boundary value problems (\ref{eq:bvp}) and (\ref{eq:adjoint_bvp}).

\begin{figure}[h!]

  \vspace{-0.35cm}
  
  \hspace*{-0.1cm}\includegraphics[scale=0.33]{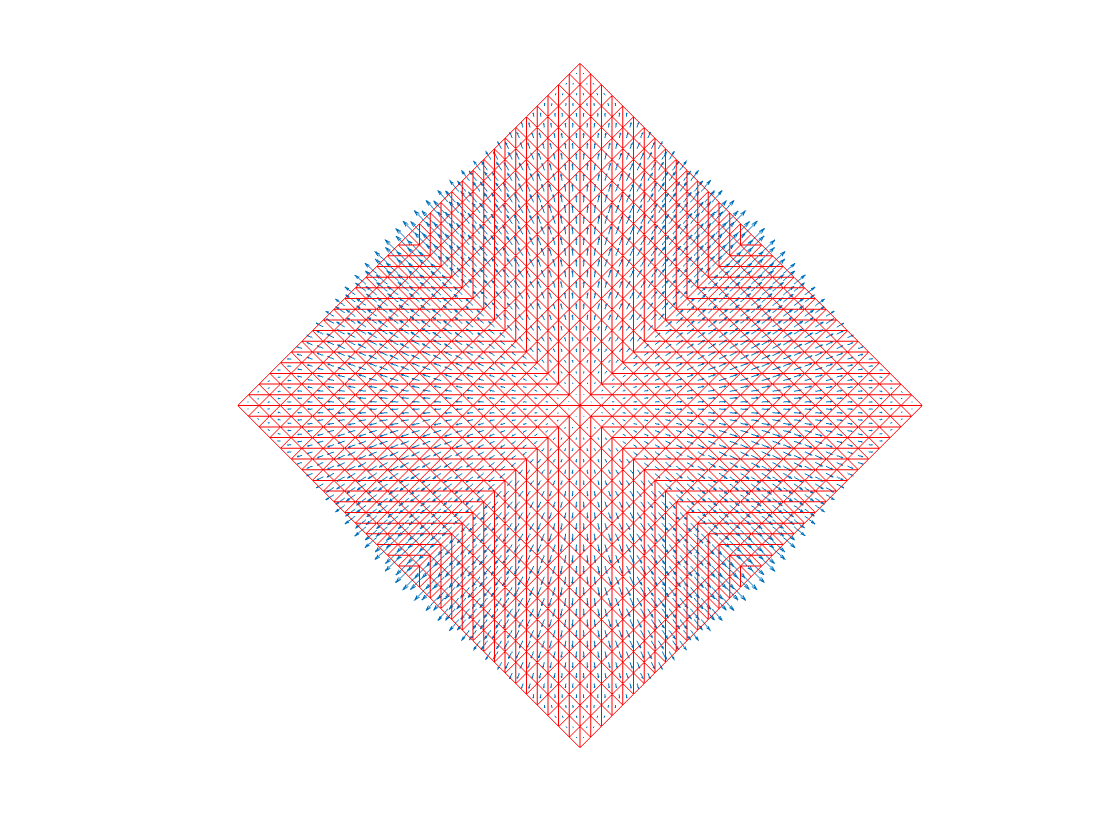}
  
  \vspace{-0.5cm}
  
  \caption{Example 1: Piecewise constant deformation $\theta_h$ for $p = 1.1$}
  \label{fig-shapeex1defo}
\end{figure}
  
The piecewise constant deformation field $\theta_h$ which comes as a by-product of solving (\ref{eq:KKT_h}) is shown in Figure
\ref{fig-shapeex1defo} for the square. The triangulation shown is the result of 4 uniform refinements of an initial one with 8 triangles (refinement level 4).
The graph shows that $\theta_h$ points into the right direction in order to deform $\Omega$
towards the optimal disk. But due to its discontinuous nature it is not useful for the construction of a deformed domain. We will use this
piecewise constant deformation field for the reconstruction of a piecewise linear continuous deformation field as outlined below.

\section{Shape gradient iteration}

\label{sec-shape_gradient_iteration}

The finite element approximation $\theta_h$ obtained from (\ref{eq:KKT_h}) for the steepest descent deformations is not continuous across element
borders. It is therefore not usable for computing a deformed shape $(\id + \theta_h) (\Omega_h)$. Instead, a continuous deformation $\theta_h^\diamond$ is
required in order to lead to an admissible domain $(\id + \theta_h^\diamond) (\Omega_h)$ with a non-degenerate triangulation $(\id + \theta_h^\diamond) (\cT_h)$.
The first idea that comes to mind would be to determine $\theta_h^\diamond \in \left. W^{1,p^\ast} (\Omega_h;\R^d) \right|_{\R^d}$ as a piecewise linear
(continuous) function such that
\begin{equation}
  \| \nabla \theta_h^\diamond - \left| S_h - K ( u_{\Omega,h} , y_{\Omega,h} ) \right|^{p-2} ( S_h - K ( u_{\Omega,h} , y_{\Omega,h} ) ) \|_{L^{p^\ast} (\Omega_h)}
  \label{eq:potential_reconstruction_global}
\end{equation}
is minimized. {\color{black} This would give us a good approximation $\nabla \theta_h^\diamond$ to $\nabla \theta$, depending on the accuracy of $S_h$
approximating $S$. By Poincar{\'e}'s inequality, due to the normalization of $\theta_h^\diamond$ to mean value zero, this would also approximate $\theta$ well.
However, (\ref{eq:potential_reconstruction_global})}
is a global variational problem which we would like to avoid because of its computational cost. Instead, we compute the continuous
deformation $\theta_h^\diamond \in W^{1,p^\ast} (\Omega_h;\R^d)$ using the following local potential reconstruction procedure:
\begin{algorithm}
  1. Compute, for each element $\tau \in \cT_h$, $\left. \nabla \theta_h^\Box \right|_\tau \in \R^{d \times d}$ such that
  \[
    \| \nabla \theta_h^\Box - \left| S_h - K ( u_{\Omega,h} , y_{\Omega,h} ) \right|^{p-2} ( S_h - K ( u_{\Omega,h} , y_{\Omega,h} ) ) \|_{L^{p^\ast} (\tau)}
    \longrightarrow \min!
  \]
  2. Compute, for each element $\tau \in \cT_h$, $\theta_h^\Box$ with $\left. \nabla \theta_h^\Box \right|_\tau$ given by 1. such that
  \[
    \| \theta_h^\Box - \theta_h \|_{L^{p^\ast} (\tau)} \longrightarrow \min!
  \]
  3. Compute $\theta_h^\diamond$ piecewise linear, continuous, by averaging at the vertices:
  \[
    \theta_h^\diamond (\nu) = \frac{1}{| \{ \tau \in \cT_h : \nu \in \tau \} |} \sum_{\tau \in \cT_h : \nu \in \tau} \theta_h^\Box (\left. \nu \right|_\tau) \: .
  \]
\end{algorithm}

Obviously, the above algorithm involves only local computations. In the first step, a nonlinear system of equations of dimension $d^2$ needs to be solved on each
element $\tau \in \cT_h$. The second step involves a nonlinear system of equations of dimension $d$ and the third step consists in averaging a small number of
terms. For $p^\ast = 2$, the algorithm reduces to (the lowest-order case of) the classical reconstruction technique by
Stenberg (see \cite{Ste:91}). An alternative would be to compute $\theta_h^\diamond$ by minimizing (\ref{eq:potential_reconstruction_global}) on a decomposition
into vertex patches as in \cite[Sect. 3]{ErnVoh:15}. {\color{black} The key idea is that one can locally combine the knowledge of a good approximation for
$\nabla \theta$ (given by $| S_h - K (u_{\Omega,h},y_{\Omega,h}) |^{p-2} (S_h - K (u_{\Omega,h},y_{\Omega,h}))$ from the solution of (\ref{eq:KKT_h})) with a good
approximation for the value of $\theta$ itself (given by $\theta_h$ from the solution of (\ref{eq:KKT_h})).}

With $\theta_h^\diamond$, an updated shape
\begin{equation}
  \Omega_h^\diamond = (\id + \alpha \theta_h^\diamond) (\Omega_h)
  \label{eq:updated_shape}
\end{equation}
can be computed, where $\alpha \in \R$ is a suitable step-size chosen in order to guarantee a certain reduction of the shape functional $J (\Omega_h^\diamond)$,
e.g. according to an Armijo-type rule. In our computations presented in the sequel, we put a little more effort into the one-dimensional minimization and approximately
solved it by
\begin{equation}
  \alpha = 2^{k^\ast} \: , \: k^\ast = \arg \min_{k \in \Z} J ((\id + 2^k \theta_h^\diamond) (\Omega_h))
  \label{eq:stepsize_discrete}
\end{equation}
i.e., with respect to a logarithmic discretization. It is reasonable to spend this additional effort since the evaluation of the shape functional is rather cheap in
comparison with the computation of the shape gradient, especially for $p < 2$.

{\em Example 1; continued.}
We test the above shape gradient iteration by trying to recover the optimal shape $D_{\sqrt{2}}$ starting with the tetragon {\color{black} with corners at
$(1,0)$, $(0,1)$, $(-1,0)$ and $(0,-1)$} as initial guess.
{\color{black} Figure \ref{fig-example_1_shapiter_level4} shows the final shape $\Omega_h^\diamond$ for $p = 2$ (left half) and $p = 1.1$ (right half) and the
corresponding triangulation on refinement level 4 (2048 triangles).} The final shape for $p = 1.1$ looks closer, in the ``eyeball norm'' to the optimal circle (dotted line)
than the one for $p = 2$, which also manifests itself in the smaller value{\color{black}s} of the shape functional {\color{black} in Table
\ref{table-example1_1point1} compared to those in Table \ref{table-example1_2}.
The functional values $\eta_{2,h} (\Omega_h^\diamond)$ are reduced less regularly with decreasing $h$ than the corresponding values
$\eta_{1.1,h} (\Omega_h^\diamond)$. The values themselves are hard to compare with each other for different $p$ since they are based on different norms.
Naturally, all the  values lie above $\eta_{2,h} (\Omega_h)$ or $\eta_{1.1,h} (\Omega_h)$, respectively, for the polygonal approximation $\Omega_h$ of the
optimal disk (see Figures \ref{fig-shapeex1eta1point1} and \ref{fig-shapeex1eta2}).

\begin{figure}[h!]

  \vspace{-0.5cm}

  \hspace{-0.65cm}\includegraphics[scale=0.34]{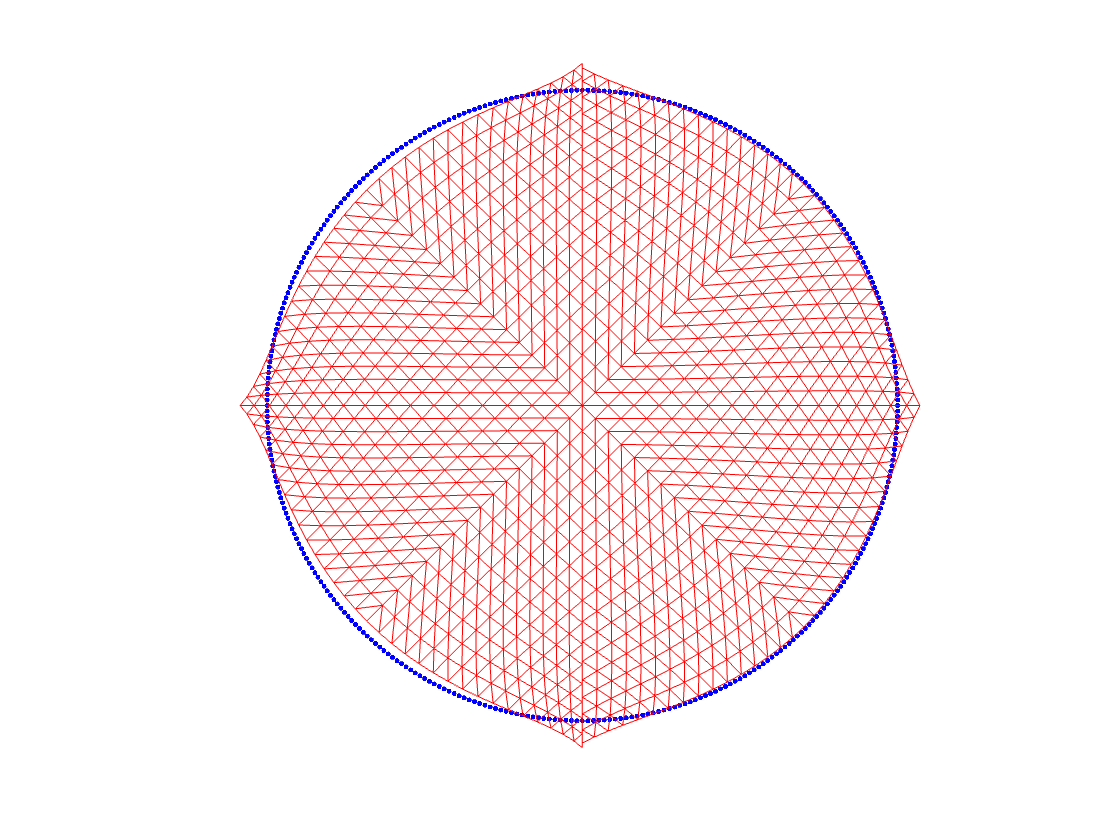}
  
  \vspace{-0.6cm}
  
  \caption{\color{black} Shape gradient iteration for $p = 2$ (left half) and $p = 1.1$ (right half)}
  \label{fig-example_1_shapiter_level4}
\end{figure}

Figure \ref{fig-example_1_shapiter_level4} also shows the potential of shape gradients in $W^{1,p^\ast} (\Omega)$ with $p^\ast > 2$ for straightening corners
as observed already in the numerical experiments in \cite{DecHerHin:23a} and utilized to produce good approximations to ``desired'' sharp corners for fluid
mechanical shape optimization in \cite{MueKueSieDecHinRun:21} and \cite{MuePinRunSie:23}. The corner angle of $\pi/2$ of the initial shape (tetragon) is
only slightly straightened by the shape gradient iteration for $p = 2$ less so on refined meshes. The results associated with $p = 1.1$ come significantly closer
to the value $\pi$ of the optimal shape (disk). It should also be noted that the boundary value problems (\ref{eq:bvp}) and (\ref{eq:adjoint_bvp}) are approximated
on the same mesh which is part of the reason for the relatively large discrepancy from the optimal disk in Figure \ref{fig-example_1_shapiter_level4}.

\begin{table}[h!]
  \centering
  {\small \color{black}
  \begin{tabular}{|c|cccc|}
    \hline $| \cT_h |$ & 2048 & 8192 & 32768 & 131072 \\ \hline
    $J (\Omega_h^\diamond)$ & $-0.19074$ & $-0.19459$ & $-0.19559$ & $-0.19596$ \\
    $\eta_{2,h} (\Omega_h^\diamond)$ & \;\;\;\;\;$2.4863 \cdot 10^{-2}$ & \;\;\;\;\;$1.1577 \cdot 10^{-2}$ & \;\;\;\;\;$8.4113 \cdot 10^{-3}$ & \;\;\;\;\;$5.2711 \cdot 10^{-3}$ \\
    corner angle & $\;\;\;0.5704 \pi$ & $\;\;\;0.5321 \pi$ & $\;\;\;0.5140 \pi$ & $\;\;\;0.5035 \pi$ \\ \hline
  \end{tabular}
  }
  \caption{\color{black} Shape approximation for $p = 2$}
  \label{table-example1_2}
\end{table}

\begin{table}[h!]
  \centering
  {\small \color{black}
  \begin{tabular}{|c|cccc|}
    \hline $| \cT_h |$ & 2048 & 8192 & 32768 & 131072 \\ \hline
    $J (\Omega_h^\diamond)$ & $-0.19546$ & $-0.19588$ & $-0.19597$ & $-0.19629$ \\
    $\eta_{1.1,h} (\Omega_h^\diamond)$ & \;\;\;\;\;$3.9901 \cdot 10^{-2}$ & \;\;\;\;\;$1.1799 \cdot 10^{-2}$ & \;\;\;\;\;$7.5568 \cdot 10^{-3}$ & \;\;\;\;\;$3.0363 \cdot 10^{-3}$ \\
    corner angle & $\;\;\;0.6768 \pi$ & $\;\;\;0.6163 \pi$ & $\;\;\;0.6154 \pi$ & $\;\;\;0.5540 \pi$ \\ \hline
  \end{tabular}
  }
  \caption{\color{black} Shape approximation for $p = 1.1$}
  \label{table-example1_1point1}
\end{table}
}

{\em Example 2 (``gingerbread man'').}
This example is taken from \cite{BarWac:20} and uses 
\begin{align*}
  f (x) & = - \frac{1}{2} + \frac{4}{5} \: | x |^2 + 2 \sum_{i=1}^5 \exp \left( - 8 \: | x - y^{(i)} |^2 \right)
  - \sum_{i=1}^5 \exp \left( - 8 \: | x - z^{(i)} |^2 \right) \\
  \mbox{ with } & \;\;\;\; y^{(i)} = \left( \sin ( \frac{(2 i + 1) \pi}{5} ) , \cos ( \frac{(2 i + 1) \pi}{5} ) \right) \: , \: i = 1 , \ldots , 5 \: , \\
  & \;\;\;\; z^{(i)} = \left( \frac{6}{5} \sin ( \frac{2 i \pi}{5} ) , \frac{6}{5} \cos ( \frac{2 i \pi}{5} ) \right) \: , \: i = 1 , \ldots , 5
\end{align*}
(and $j (u_\Omega) = u_\Omega / 2$ as in Example 1). The optimal shape is not convex and was used to study the effect of convexity constraints in
\cite{BarWac:20}. For us, it serves the purpose of a more challenging shape optimization problem as a test for our method. Starting from the unit disk
as the initial shape, we perform shape gradient iteration steps until convergence or until the mesh degenerates. On refinement level 4 (2048 triangles),
the final results are shown in Figure \ref{fig-example_2_shapiter_2_level4} for $p = 2$ and in Figure \ref{fig-example_2_shapiter_1point1_level4} for $p = 1.1$.

\begin{figure}[h!]

  \vspace{-0.25cm}

  \includegraphics[scale=0.28]{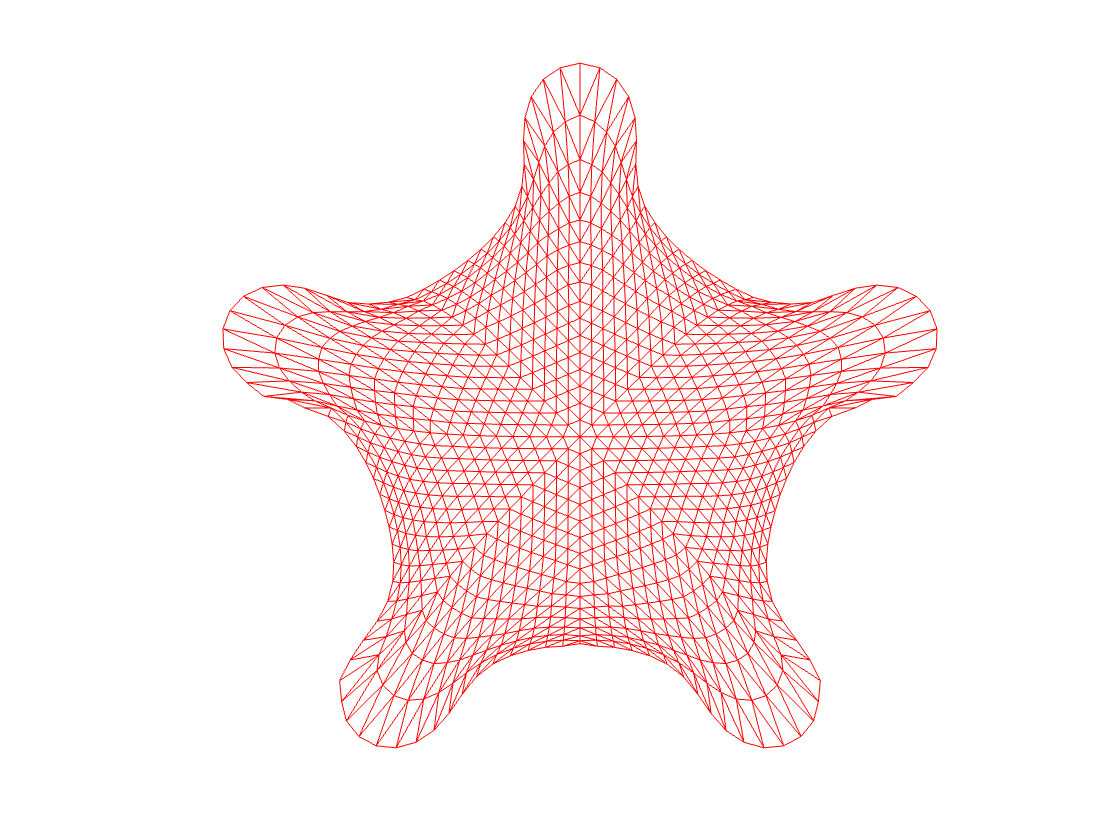}
  
  \vspace{-0.5cm}
  
  \caption{Shape approximation for $p = 2$: shape iteration terminates due to degenerate mesh}
  \label{fig-example_2_shapiter_2_level4}
\end{figure}

\begin{figure}[h!]

  \vspace{-0.25cm}

  \includegraphics[scale=0.28]{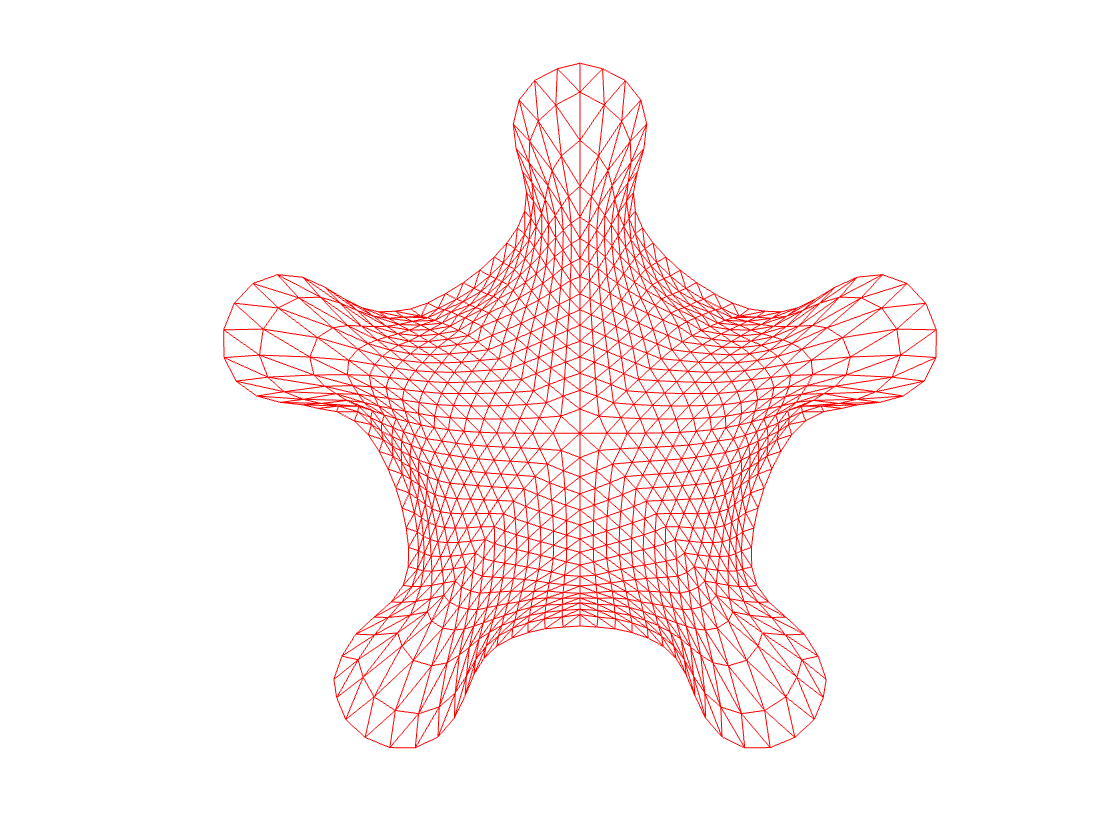}
  
  \vspace{-0.5cm}
  
  \caption{Shape approximation for $p = 1.1$: shape iteration converges}
  \label{fig-example_2_shapiter_1point1_level4}
\end{figure}

The triangulation shown in Figure \ref{fig-example_2_shapiter_2_level4}, obtained by the shape gradient iteration with $p = 2$, is severely degenerate which is the
reason for early termination. The most distorted triangles are located at the underside of the left and right ``arm'' of the gingerbread man. In contrast, the graph in
Figure \ref{fig-example_2_shapiter_1point1_level4}, associated with $p = 1.1$, is the result of a converged solution and shows finer details of the optimal shape which
develop during the final iterations. This confirms the observation made in \cite{DecHerHin:22} that shape gradients in $W^{1,p^\ast}$ with $p^\ast > 2$ (in our case
$p^\ast = 1/(1 - 1/1.1) = 11$) improve the quality of the meshes encountered during the iteration.

\begin{table}[h!]
  \centering
  {\small
  \begin{tabular}{|c|cccc|}
    \hline $| \cT_h |$ & 2048 & 8192 & 32768 & 131072 \\ \hline
    $J (\Omega_h^\diamond)$ & $-1.41655 \cdot 10^{-2}$ & $-1.45463 \cdot 10^{-2}$ & $-1.48595 \cdot 10^{-2}$ & $-1.49520 \cdot 10^{-2}$ \\
    $\eta_{2,h} (\Omega_h^\diamond)$ & \;\;$1.7669 \cdot 10^{-3}$ & \;\;$2.3609 \cdot 10^{-3}$ & \;\;$1.1601 \cdot 10^{-3}$ & \;\;$2.9467 \cdot 10^{-4}$ \\ \hline
  \end{tabular}
  }
  \caption{Shape approximation for $p = 2$: shape iteration terminates due to degenerate mesh}
  \label{table-example_2_shapiter_2_level4}
\end{table}

\begin{table}[h!]
  \centering
  {\small
  \begin{tabular}{|c|cccc|}
    \hline $| \cT_h |$ & 2048 & 8192 & 32768 & 131072 \\ \hline
    $J (\Omega_h^\diamond)$ & $-1.42676 \cdot 10^{-2}$ & $-1.47522 \cdot 10^{-2}$ & $-1.49038 \cdot 10^{-2}$ & $-1.49517 \cdot 10^{-2}$ \\
    $\eta_{1.1,h} (\Omega_h^\diamond)$ \!\!\! & \;\;$2.9972 \cdot 10^{-3}$ &\; \;$1.4073 \cdot 10^{-3}$ & \;\;$7.6197 \cdot 10^{-4}$ & \;\;$3.9894 \cdot 10^{-4}$ \\ \hline
  \end{tabular}
  }
  \caption{Shape approximation for $p = 1.1$: shape iteration converges}
  \label{table-example_2_shapiter_1point1_level4}
\end{table}

The superiority of using $p < 2$ in our shape gradient iteration is illustrated quantitatively by the numbers in Tables \ref{table-example_2_shapiter_2_level4}
and \ref{table-example_2_shapiter_1point1_level4}. The minimum value of the shape functional $J (\Omega_h^\diamond)$ reached for $p = 1.1$ is smaller than
that for $p = 2$. The gap between these minimal values is more pronounced on coarser meshes and closes by refinement. The distance to
the optimal shape, measured by the stationarity functional $\eta_{p,h} (\Omega_h^\diamond)$, also decreases as the triangulation is refined. Remarkably,
the behavior seems to be proportional to $h$ asymptotically which suggests that $\Omega_h$ tends to the optimal shape at this rate for $h \rightarrow 0$.
For $p=2$, the iteration terminated prematurely because of mesh degeneracy on the two coarser triangulations while convergence was achieved on the two
finer ones.

{\color{black}

\begin{figure}[h!]

  \vspace{-0.45cm}

  \hspace{-1.15cm}\includegraphics[scale=0.245]{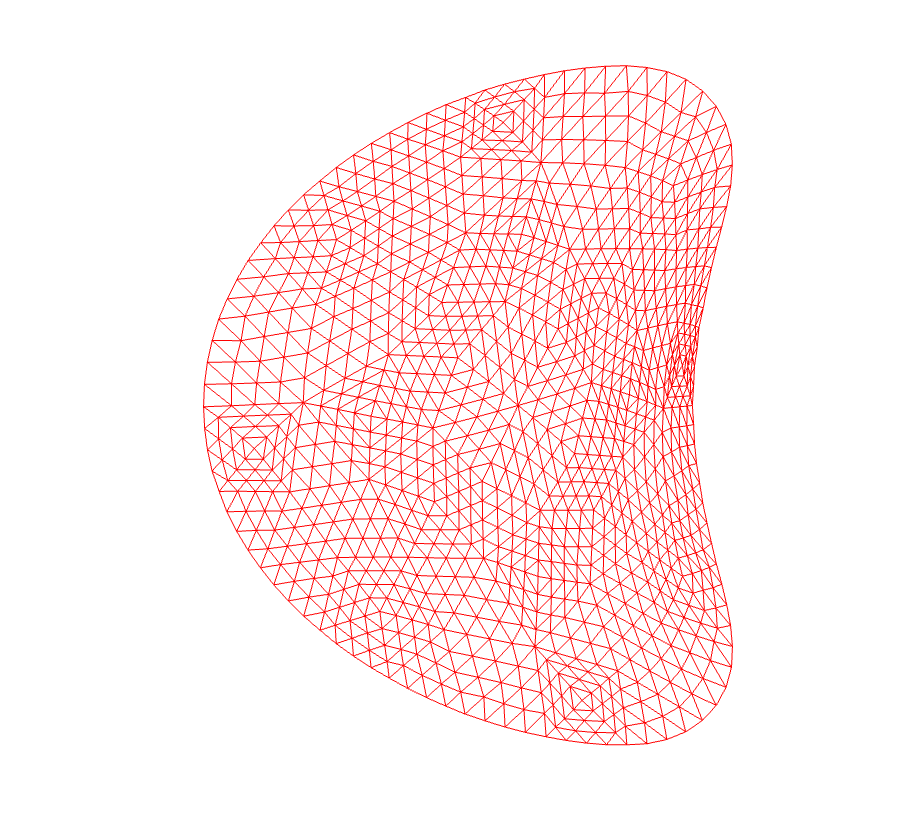}\hspace{-1.45cm}\includegraphics[scale=0.245]{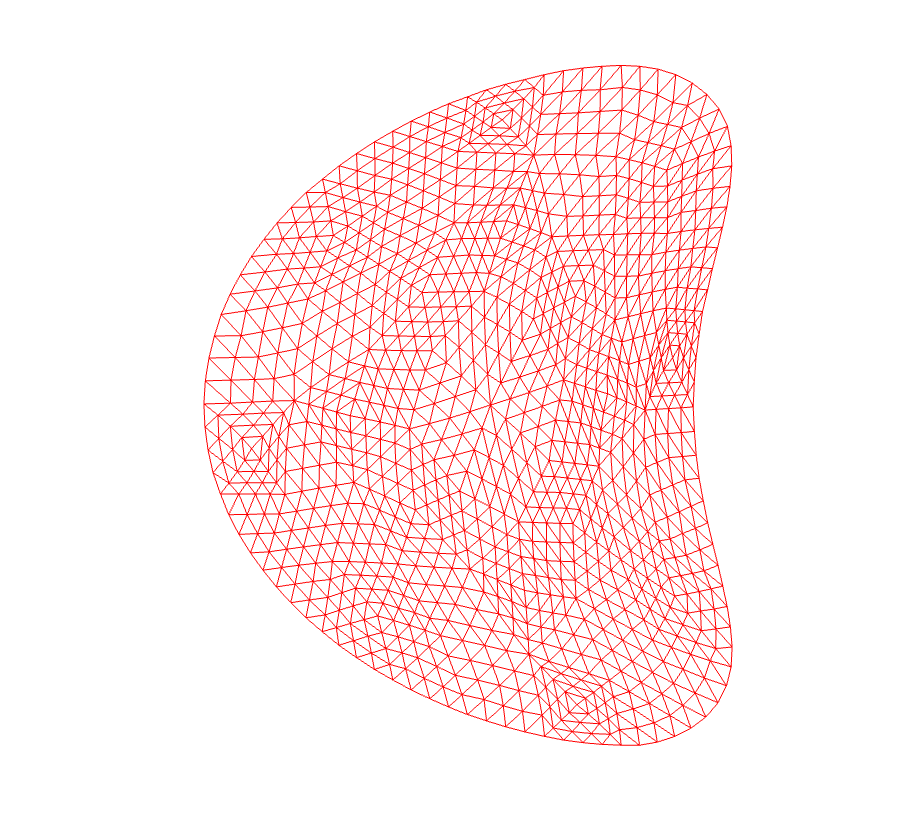}
  
  \vspace{-0.5cm}
  
  \caption{\color{black} Shape approximation for $p = 2$ (left) and $p = 1.1$ (right)}
  \label{fig-example_3_shapiter_level4}
\end{figure}

{\em Example 3 (``kidney shape'').} This example is taken from \cite{EtlHerLoaWac:20} (see also \cite{HerLoa:24}) and uses
\[
  f (x) = \frac{5}{2} \left( x_1 + \frac{2}{5} - x_2^2 \right)^2 + x_1^2 + x_2^2 - 1
\]
(and again $j (u_\Omega) = u_\Omega / 2$ as in our previous examples). The optimal shape (which is not explicitly known) is again not convex. In contrast
to our previous examples, the barycenter is also not known beforehand which means that the iteration (\ref{eq:updated_shape}) based on $\theta_h \in \Theta_h$
having zero mean value must be complemented with planar translations. This is realized in our computations by the following two-step shape gradient iteration:
\begin{equation}
  \begin{split}
    \Omega_h^\odot & = (\Omega_h + e^\odot) \: , \\
    \Omega_h^\diamond & = (\id + \alpha \theta_h^\diamond) (\Omega_h^\odot) \: ,
  \end{split}
  \label{eq:updated_shape_two_step}
\end{equation}
where $e^\odot \in \R^d$ in the first half-step is determined in such a way that
\begin{equation}
  J^\prime (\Omega_h + e^\odot) = ( f \: \nabla y_{\Omega_h + e^\odot} , e ) = 0 \mbox{ for all } e \in \R^d \: .
\end{equation}
The step-size $\alpha$ in the second half-step is again selected according to (\ref{eq:stepsize_discrete}).

\begin{table}[h!]
  \centering
  {\small \color{black}
  \begin{tabular}{|c|cccc|}
    \hline $| \cT_h |$ & 2048 & 8192 & 32768 & 131072 \\ \hline
    $J (\Omega_h^\diamond)$ & $-4.63464\cdot 10^{-2}$ & $-4.66937 \cdot 10^{-2}$ & $-4.68261 \cdot 10^{-2}$ & $-4.68700 \cdot 10^{-2}$ \\
    $\eta_{2,h} (\Omega_h^\diamond)$ & \;\;\;$4.3334 \cdot 10^{-3}$ & \;\;\;$2.1687 \cdot 10^{-3}$ & \;\;\;$1.0897 \cdot 10^{-3}$ & \;\;\;$5.4797 \cdot 10^{-4}$ \\
    barycenter & $-3.1088 \cdot 10^{-1}$ & $-3.0932 \cdot 10^{-1}$ & $-3.0885 \cdot 10^{-1}$ & $-3.0926 \cdot 10^{-1}$ \\ \hline
  \end{tabular}
  }
  \caption{\color{black} Shape approximation for $p = 2$}
  \label{table-example_3_shapiter_2_level4}
\end{table}

\begin{table}[h!]
  \centering
  {\small \color{black}
  \begin{tabular}{|c|cccc|}
    \hline $| \cT_h |$ & 2048 & 8192 & 32768 & 131072 \\ \hline
    $J (\Omega_h^\diamond)$ & $-4.63393 \cdot 10^{-2}$ & $-4.67017 \cdot 10^{-2}$ & $-4.68305 \cdot 10^{-2}$ & $-4.68711 \cdot 10^{-2}$ \\
    $\eta_{1.1,h} (\Omega_h^\diamond)$ \!\!\! & \;\;\;$5.5282 \cdot 10^{-3}$ &\;\;\;$2.7656 \cdot 10^{-3}$ & \;\;\;$1.3867 \cdot 10^{-3}$ & \;\;\;$6.9501 \cdot 10^{-4}$ \\
    barycenter & $-3.1049 \cdot 10^{-1}$ & $-3.0949 \cdot 10^{-1}$ & $-3.0914 \cdot 10^{-1}$ & $-3.0925 \cdot 10^{-1}$ \\ \hline
  \end{tabular}
  }
  \caption{\color{black} Shape approximation for $p = 1.1$}
  \label{table-example_3_shapiter_1point1_level4}
\end{table}

Figure \ref{fig-example_3_shapiter_level4} shows the final shapes and associated triangulations. The mesh quality in the vicinity of the non-convex part of
the shape is again improved by going from $p = 2$ (left graph) to $p = 1.1$ (right graph). Again, this leads to smaller values of the shape functional
for $p = 1.1$ (Table \ref{table-example_3_shapiter_1point1_level4}) compared to $p = 2$ (Table \ref{table-example_3_shapiter_2_level4}) although the
differences are less pronounced as in the previous example. Note the factor $1/2$ in the scaling of the objective function with respect to the results in \cite{HerLoa:24}.
The decreasing behavior of $\eta_{p,h} (\Omega_h^\diamond)$ proportionally to $h$ is again clearly visible in Table \ref{table-example_3_shapiter_2_level4} and Table
\ref{table-example_3_shapiter_1point1_level4} and supports the use of this quantity as a measure for the closeness of the approximate shape to being optimal.
}

\section{Error Analysis}
\label{sec-error_analysis}

In the computational results of the previous sections, numerical values for $\eta_{p,h} (\Omega_h)$ were presented and interpreted as a measure for closeness
of the shape $\Omega_h$ to optimality. In contrast to $\eta_p (\Omega_h)$, which by Theorem \ref{theorem-inf_shape_functional_pstar} indeed gives an
indication of how close to stationarity the shape $\Omega_h$ is, $\eta_{p,h} (\Omega_h)$ also contains the discretization errors associated with solving
(\ref{eq:KKT_2}) as well as (\ref{eq:bvp}) and (\ref{eq:adjoint_bvp}). We must therefore show that the difference $\eta_p (\Omega_h) - \eta_{p,h} (\Omega_h)$
is comparably small in order to rule out the possibility that the desired value $\eta_p (\Omega_h)$ is much bigger than the computed one $\eta_{p,h} (\Omega_h)$.

Since $\eta_p (\Omega)$ depends on the exact solutions $u_\Omega$ and $y_\Omega$ of (\ref{eq:bvp}) and (\ref{eq:adjoint_bvp}), while $\eta_{p,h} (\Omega)$
depends on the finite element approximations $u_{\Omega,h}$ and $y_{\Omega,h}$, we need an estimate of the difference between
$K ( u_\Omega , y_\Omega )$ and $K ( u_{\Omega,h} , y_{\Omega,h} )$ to start with.

\begin{lemma}
  For $p \in ( 1, 2 ]$, if $\{ u_\Omega , y_\Omega \} \subset W^{1,q} (\Omega)$ with $q = 2 p /(2-p)$, then
  \begin{equation}
    \begin{split}
      \| K ( u_\Omega , y_\Omega ) & - K ( u_{\Omega,h} , y_{\Omega,h} ) \|_{L^p (\Omega)} \\
      & \leq (2 d)^{1/2} \left( \| \nabla u_\Omega \|_{L^q (\Omega)} \| \nabla (y_\Omega - y_{\Omega,h}) \|_{L^2 (\Omega)} \right. \\
      & \hspace{4cm} \left. + \| \nabla y_\Omega \|_{L^q (\Omega)} \| \nabla (u_\Omega - u_{\Omega,h}) \|_{L^2 (\Omega)} \right)
    \end{split}
    \label{eq:K_bound_difference}
  \end{equation}
  holds.
  \label{lemma-K_bound_difference}
\end{lemma}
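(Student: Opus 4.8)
The plan is to bound the difference $K(u_\Omega,y_\Omega) - K(u_{\Omega,h},y_{\Omega,h})$ pointwise in terms of $\nabla u_\Omega$, $\nabla y_\Omega$, and the gradient errors, and then integrate using Hölder's inequality. The definition \eqref{eq:definition_K} shows that $K$ is \emph{bilinear} in $(\nabla u, \nabla y)$, each term being either an inner product $\nabla u \cdot \nabla y$ times $I$ or an outer product $\nabla y \otimes \nabla u$. The first step is therefore to exploit this bilinearity by writing the difference telescopically as
\[
  K(u_\Omega,y_\Omega) - K(u_{\Omega,h},y_{\Omega,h})
  = B(\nabla u_\Omega, \nabla y_\Omega - \nabla y_{\Omega,h})
  + B(\nabla u_\Omega - \nabla u_{\Omega,h}, \nabla y_{\Omega,h}),
\]
where $B$ denotes the bilinear form generating $K$. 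This splits the error into a part involving $\nabla(y_\Omega - y_{\Omega,h})$ paired with the exact $\nabla u_\Omega$, and a part involving $\nabla(u_\Omega - u_{\Omega,h})$ paired with the discrete $\nabla y_{\Omega,h}$.

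The second step is a pointwise norm estimate on the bilinear form. For each fixed point, $B(a,b)$ is a $d\times d$ matrix built from $a\cdot b$, $a\otimes b$, and $b\otimes a$; a direct computation of its Frobenius norm yields $|B(a,b)| \le C\,|a|\,|b|$ with an explicit constant. Tracking the contributions of the three terms (noting $|(a\cdot b) I| = \sqrt{d}\,|a\cdot b|$ and $|a\otimes b| = |a|\,|b|$) and using Cauchy–Schwarz gives the factor $(2d)^{1/2}$ appearing in \eqref{eq:K_bound_difference}. This is the one genuinely computational step, but it is routine linear algebra.

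The third step integrates. Applying the pointwise bound and then Hölder's inequality with the exponent split $\tfrac1p = \tfrac1q + \tfrac12$ (which is exactly the relation $q = 2p/(2-p)$ in the hypothesis) to each of the two terms gives
\[
  \| B(\nabla u_\Omega, \nabla(y_\Omega - y_{\Omega,h})) \|_{L^p(\Omega)}
  \le (2d)^{1/2}\, \| \nabla u_\Omega \|_{L^q(\Omega)}\, \| \nabla(y_\Omega - y_{\Omega,h}) \|_{L^2(\Omega)},
\]
and symmetrically for the second term. Summing the two and applying the triangle inequality in $L^p$ produces \eqref{eq:K_bound_difference}. The only subtlety is that the second term pairs the exact $\nabla u_\Omega$ against the discrete $\nabla y_{\Omega,h}$; to recover $\|\nabla y_\Omega\|_{L^q}$ rather than $\|\nabla y_{\Omega,h}\|_{L^q}$ in the bound, I would arrange the telescoping so that the \emph{exact} solutions carry the $L^q$ norm. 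The main obstacle, such as it is, is bookkeeping: making sure the telescoping is organized so both $L^q$ factors fall on the exact gradients $\nabla u_\Omega, \nabla y_\Omega$ (as stated) and verifying that the Hölder exponents are admissible, i.e.\ that the hypothesis $\{u_\Omega, y_\Omega\} \subset W^{1,q}(\Omega)$ with $q = 2p/(2-p) \ge 2$ for $p \in (1,2]$ guarantees $q \in [2,\infty)$ so that $\nabla u_\Omega, \nabla y_\Omega \in L^q$ is the right integrability assumption.
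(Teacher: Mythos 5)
Your plan follows the paper's proof almost step for step --- the same bilinear telescoping, the same pointwise Frobenius-norm estimate of the linear map $\cA(M) = \tr(M)\, I - M - M^T$ applied to rank-one matrices, and the same H\"older split $1/p = 1/q + 1/2$ --- but it contains one genuine gap, at exactly the point you dismiss as ``bookkeeping.'' No arrangement of a two-term telescoping puts both $L^q$ factors on the exact solutions: the identity $B(a,b) - B(a_h,b_h) = B(a, b-b_h) + B(a-a_h, b_h)$ and its mirror image $B(a-a_h, b) + B(a_h, b-b_h)$ each leave exactly one \emph{discrete} gradient in the slot that H\"older forces into $L^q$. The symmetric three-term variant $B(a,b-b_h) + B(a-a_h,b) - B(a-a_h,b-b_h)$ does place the exact gradients in both $L^q$ slots, but the quadratic remainder $B(\nabla(u_\Omega - u_{\Omega,h}), \nabla(y_\Omega - y_{\Omega,h}))$ is not absorbed by the right-hand side of the lemma: H\"older turns it into $\|\nabla(u_\Omega - u_{\Omega,h})\|_{L^q} \|\nabla(y_\Omega - y_{\Omega,h})\|_{L^2}$, and nothing in the hypotheses controls the discretization error in $L^q$. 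So pure rearrangement cannot produce the stated bound with the stated constant; a new ingredient is required.

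The paper closes this gap analytically: it keeps the splitting with $\nabla y_{\Omega,h}$ in the $L^q$ slot and then removes the discrete function via Galerkin orthogonality, writing $\| \nabla y_{\Omega,h} \|_{L^q} = \sup_{w_h} ( \nabla y_{\Omega,h} , \nabla w_h ) / \| \nabla w_h \|_{L^{q^\star}} = \sup_{w_h} ( \nabla y_\Omega , \nabla w_h ) / \| \nabla w_h \|_{L^{q^\star}} \leq \| \nabla y_\Omega \|_{L^q}$, i.e.\ a $W^{1,q}$-stability property of the Galerkin (Ritz) projection --- the one step of the proof that is neither algebra nor H\"older, and the one your plan needs. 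Two smaller points: a naive triangle inequality over the three terms of $\cA(a \otimes b)$ gives the constant $\sqrt{d} + 2$, which exceeds $(2d)^{1/2}$ for $d = 2, 3$; the paper instead expands exactly, $|\cA(a \otimes b)|^2 = (d-2)(a \cdot b)^2 + 2 |a|^2 |b|^2 \leq d\, |a|^2 |b|^2$, so make sure your ``routine linear algebra'' is this computation. Also, for $p = 2$ the exponent is $q = \infty$, not $q \in [2,\infty)$ as you wrote (H\"older still applies, but the hypothesis is then $W^{1,\infty}$ regularity).
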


\begin{proof}
  (i) The first step consists in showing that, for $u , v , y , z \in H^1 (\Omega)$, we have
  \begin{equation}
    | K ( u , y ) - K ( v , z ) |^2 \leq 2 d \left( | \nabla u |^2 \: | \nabla (y - z) |^2 + | \nabla (u - v) |^2 \: | \nabla z |^2 \right)
    \label{eq:K_bound_difference_pointwise}
  \end{equation}
  (almost everywhere) in $\Omega$. This is the consequence of a purely algebraic argument relying on the structure of $K ( \: \cdot \: , \: \cdot \: )$ as follows:
  Let us introduce the linear mapping $\cA : \R^{d \times d} \rightarrow \R^{d \times d}$ by
  \[
    \cA (M) = \tr (M) \: I - M - M^T
  \]
  and note that the definition of $K ( \: \cdot \: , \: \cdot \: )$ in (\ref{eq:definition_K}) gives $K ( v , z ) = \cA ( \nabla v \otimes \nabla z )$. Therefore,
  \begin{equation}
    \begin{split}
      K ( u , y ) - K ( v , z ) & = \cA ( \nabla u \otimes \nabla y - \nabla v \otimes \nabla z ) \\
      & = \cA ( \nabla u \otimes \nabla (y - z) + \nabla (u - v) \otimes \nabla z \\
      & = \cA ( \nabla u \otimes \nabla (y - z) ) + \cA ( \nabla (u - v) \otimes \nabla z ) \: ,
    \end{split}
  \end{equation}
  which leads to
  \begin{equation}
    | K ( u , y ) - K ( v , z ) |^2 \leq 2 \left( | \cA ( \nabla u \otimes \nabla (y - z) ) |^2 + | \cA ( \nabla (u - v) \otimes \nabla z ) |^2 \right) \: .
    \label{eq:K_bound_difference_pointwise_step1}
  \end{equation}
  For arbitrary $a , b \in \R^d$, one has (for $d \geq 2$)
  \begin{equation}
    \begin{split}
      | \cA ( a \otimes b ) |^2 & = | \tr (a \otimes b) \: I - a \otimes b - b \otimes a |^2 = | (a \cdot b) \: I - a \otimes b - b \otimes a |^2 \\
      & = ( a \cdot b )^2 | I |^2 + | a \otimes b |^2 + | b \otimes a |^2 \\
      & \;\;\;\; - 2 ( a \cdot b ) ( I : (a \otimes b) ) - 2 ( a \cdot b ) ( I : (b \otimes a) ) + 2 ( a \otimes b ) : ( b \otimes a ) \\
      & = d \: ( a \cdot b )^2 + 2 | a |^2 | b |^2 - 4 ( a \cdot b )^2 + 2 ( a \cdot b )^2 \\
      & = (d - 2) \: ( a \cdot b )^2 + 2 | a |^2 | b |^2 \leq d \: | a |^2 | b |^2 \: .
    \end{split}
    \label{eq:rank_1_estimate}
  \end{equation}
  Inserting (\ref{eq:rank_1_estimate}) (with $a = \nabla u , b = \nabla (y - z)$ and with $a = \nabla (u - v) , b = \nabla z$, respectively) into
  (\ref{eq:K_bound_difference_pointwise_step1}) implies (\ref{eq:K_bound_difference_pointwise}).
  
  (ii) From (\ref{eq:K_bound_difference_pointwise}), we obtain
  \begin{align*}
    | K ( u , y ) - K ( v , z ) |^p & \leq (2 d)^{p/2} \left( | \nabla u |^2 | \nabla (y - z) |^2 + | \nabla (u - v) |^2 | \nabla z |^2 \right)^{p/2} \\
    & \leq (2 d)^{p/2} \left( | \nabla u | | \nabla (y - z) | + | \nabla (u - v) | | \nabla z | \right)^p
  \end{align*}
  and from this, using Minkowski's inequality, we are led to
  \begin{equation}
    \begin{split}
      \| & K ( u , y ) - K ( v , z ) \|_{L^p (\Omega)} \\
      & \leq (2 d)^{1/2}
      \left( \int_\Omega \left( | \nabla u | \: | \nabla (y - z) | + | \nabla (u - v) | \: | \nabla z | \right)^p \: dx \right)^{1/p} \\
      & \leq (2 d)^{1/2} \left( \left( \int_\Omega | \nabla u |^p | \nabla (y - z) |^p dx \right)^{1/p}
      + \left( \int_\Omega | \nabla (u - v) |^p | \nabla z |^p dx \right)^{1/p} \right) \: .
    \end{split}
  \end{equation}
  From this, H\"older's inequality gives
  \begin{equation}
    \begin{split}
      \| K ( u , y ) & - K ( v , z ) \|_{L^p (\Omega)} \\
      & \leq (2 d)^{1/2} \left( \| \nabla u \|_{L^q (\Omega)} \| \nabla (y - z) \|_{L^2 (\Omega)} + \| \nabla (u - v) \|_{L^2 (\Omega)} \| \nabla z \|_{L^q (\Omega)} \right)
    \end{split}
    \label{eq:K_bound_difference_general}
  \end{equation}
  with $q = 2 p /(2-p)$.
  
  (iii) The desired inequality (\ref{eq:K_bound_difference}) finally follows from inserting $u = u_\Omega$, $y = y_\Omega$, $v = u_{\Omega,h}$ and $z = y_{\Omega,h}$
  into (\ref{eq:K_bound_difference_general}) and using the fact that
  \[
    \| \nabla y_{\Omega,h} \|_{L^q (\Omega)} = \sup_{w_h} \frac{( \nabla y_{\Omega,h} , \nabla w_h )}{\| \nabla w_h \|_{L^{q^\star} (\Omega)}}
    = \sup_{w_h} \frac{( \nabla y_\Omega , \nabla w_h )}{\| \nabla w_h \|_{L^{q^\star} (\Omega)}} \leq \| \nabla y_\Omega \|_{L^q (\Omega)}
  \]
  holds due to Galerkin orthogonality.
\end{proof}

The other ingredients into $\eta_p (\Omega)$ and $\eta_{p,h}$ are the solutions $S \in \Sigma^p$ of (\ref{eq:KKT_4}) and $S_h \in \Sigma_h^p$ of (\ref{eq:KKT_h})
which are treated in the following lemma.

\begin{lemma}
  If $P_h^0$ and $P_h^{0,b}$ denote the closest point projections with respect to $L^p (\Omega)$ and $L^p (\partial \Omega)$, respectively, onto piecewise constants
  on $\cT_h$, then 
  \begin{equation}
    \begin{split}
      \| S - S_h \|_{L^p (\Omega)} & \leq \| K ( u_\Omega , y_\Omega ) - K ( u_{\Omega,h} , y_{\Omega,h} ) \|_{L^p (\Omega)} \\
      & \;\;\;\; + C \left( \| f \: \nabla y_\Omega - P_h^0 f \: \nabla y_{\Omega,h} \|_{L^p (\Omega)} \phantom{\int} \right.\\
      & \hspace{2cm} + \| j^\prime (u_\Omega) \: \nabla u_\Omega - P_h^0 j^\prime (u_{\Omega,h}) \: \nabla u_{\Omega,h} \|_{L^p (\Omega)} \\
      & \hspace{3cm}  \left. \phantom{\int} + \| j (u_\Omega) - P_h^{0,b} j (u_{\Omega,h}) \|_{W^{-1/p,p} (\partial \Omega)} \right)
    \end{split}
    \label{eq:S_bound_difference}
  \end{equation}
  holds with a constant $C$ (independently of $h$).
  \label{lemma-S_bound_difference}
\end{lemma}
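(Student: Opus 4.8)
The plan is to regard both $S$ and $S_h$ as constrained $L^p$ best approximations and to estimate their difference by a perturbation argument that separates the change of the \emph{target} $K(u_\Omega,y_\Omega) \to K(u_{\Omega,h},y_{\Omega,h})$ from the change of the \emph{constraint data}. The structural fact I would exploit first is that, because the $RT_0$ divergence is piecewise constant and is tested in \eqref{eq:KKT_h} only against piecewise constants, the discrete solenoidal space $\{ Z_h \in \Sigma_h^p : \div Z_h = 0, \ Z_h \cdot n = 0 \text{ on } \partial\Omega \}$ is \emph{contained} in its continuous counterpart $\Sigma^{p,0}_{\rm sol}$. From the first lines of \eqref{eq:KKT_4} and \eqref{eq:KKT_h} one reads off that the two residual duality maps $|S - K(u_\Omega,y_\Omega)|^{p-2}(S - K(u_\Omega,y_\Omega))$ and $|S_h - K(u_{\Omega,h},y_{\Omega,h})|^{p-2}(S_h - K(u_{\Omega,h},y_{\Omega,h}))$ both annihilate this discrete solenoidal space, exactly as in \eqref{eq:multiplier_gradient}. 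This nestedness is what makes an $h$-independent comparison possible.

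Next I would introduce an intermediate tensor $\tilde S \in \Sigma^{p,0}$, defined as the continuous constrained best approximation of the \emph{discrete} target $K(u_{\Omega,h},y_{\Omega,h})$ subject to the \emph{exact} constraints \eqref{eq:constraints_l_p_minimization}, and split $\| S - S_h \|_{L^p(\Omega)} \le \| S - \tilde S \|_{L^p(\Omega)} + \| \tilde S - S_h \|_{L^p(\Omega)}$. The first summand compares two projections onto the \emph{same} affine constraint set that differ only in their argument. Testing the difference of the two associated duality maps against $S - \tilde S \in \Sigma^{p,0}_{\rm sol}$ and invoking monotonicity of the $L^p$ duality map for $1 < p \le 2$ yields the bound $\| S - \tilde S \|_{L^p(\Omega)} \le \| K(u_\Omega,y_\Omega) - K(u_{\Omega,h},y_{\Omega,h}) \|_{L^p(\Omega)}$; in the Hilbert case $p = 2$ this is just the nonexpansivity of the orthogonal projection onto $\Sigma^{2,0}_{\rm sol}$ applied to the difference of targets, and the same type of stability carries over for $1 < p < 2$. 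This reproduces the first term on the right-hand side of \eqref{eq:S_bound_difference}, to which Lemma \ref{lemma-K_bound_difference} will subsequently be applied.

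The second summand $\| \tilde S - S_h \|_{L^p(\Omega)}$ carries the genuine discretization-plus-data error: $\tilde S$ and $S_h$ share the target $K(u_{\Omega,h},y_{\Omega,h})$ but live in different spaces and satisfy \emph{exact} versus \emph{projected} constraints. Here I would use the commuting (Fortin) interpolation of $RT_0$, for which $\div \, \Pi_h = P_h^0 \, \div$ and the normal trace is preserved edgewise, together with a uniform $L^p$ inf-sup stability of the $RT_0$/piecewise-constant pair, to construct a competitor in the discrete admissible set whose distance to $\tilde S$ is controlled solely by the mismatch of the constraint data. Recognising $\nabla j (u_\Omega) = j^\prime (u_\Omega) \nabla u_\Omega$ and working with the enlarged constraint form \eqref{eq:constraints_l_p_minimization_modified_enlarged}, this mismatch is exactly $\| f \, \nabla y_\Omega - P_h^0 f \, \nabla y_{\Omega,h} \|_{L^p(\Omega)}$ and $\| j^\prime (u_\Omega) \nabla u_\Omega - P_h^0 j^\prime (u_{\Omega,h}) \nabla u_{\Omega,h} \|_{L^p(\Omega)}$ for the divergence constraint, and $\| j (u_\Omega) - P_h^{0,b} j (u_{\Omega,h}) \|_{W^{-1/p,p}(\partial\Omega)}$ for the normal-trace constraint, the latter measured in the dual trace norm via a lifting argument. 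A final application of the monotonicity estimate converts this constraint residual into the desired $L^p$ bound for $\| \tilde S - S_h \|_{L^p(\Omega)}$ with a constant $C$ independent of $h$.

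I expect the main obstacle to be the non-Hilbertian nature of the problem for $p \ne 2$. Monotonicity of the $L^p$ duality map yields only a degenerate, weighted coercivity of the form $\int |a-b|^2 / (|a|+|b|)^{2-p} \, dx$ rather than a clean power of $\| a - b \|_{L^p(\Omega)}$, so converting it into the stated estimate — and, crucially, with a constant $C$ that does not deteriorate as $h \to 0$ — requires pairing it with the companion Hölder-type bound $\big| |a|^{p-2} a - |b|^{p-2} b \big| \le C_p \, |a-b|^{p-1}$ and an $h$-uniform inf-sup constant. A secondary technical point is the treatment of the boundary residual in the fractional dual norm $W^{-1/p,p}(\partial\Omega)$, which forces a trace/extension argument instead of a direct $L^p$ estimate on $\partial\Omega$.
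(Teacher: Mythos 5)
Your skeleton coincides with the paper's up to the midpoint: the paper introduces exactly your intermediate field $\tilde S$ (the constrained best approximation of the perturbed target $K(u_{\Omega,h},y_{\Omega,h})$ subject to the exact constraints), splits by the triangle inequality, and bounds the first summand by $\| K(u_\Omega,y_\Omega) - K(u_{\Omega,h},y_{\Omega,h}) \|_{L^p(\Omega)}$ via the ``contraction property of the projection'' --- which it asserts with no more justification than you give, so your candor about nonexpansiveness failing for $p \neq 2$ is fair (the paper glosses over this). The genuine divergence, and the genuine gap, is in the second summand. The paper uses no Fortin operator, no inf-sup condition, and no monotonicity of the duality map anywhere. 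Instead it bounds $\| \tilde S - S_h \|_{L^p(\Omega)}$ by the gap between the two affine admissible sets, observes that the set of differences $\tilde T - T_h$ is exactly the set of all $T \in \Sigma^p$ whose divergence and normal trace equal the data mismatches (add any fixed $T_h \in \Sigma^p_{{\rm adm},h}$ to such a $T$), and then computes the minimal $L^p$ norm over this set explicitly through the row-wise Helmholtz decomposition: the minimizer is $T = \nabla \vartheta$ with $\vartheta$ solving the weak Neumann problem \eqref{eq:Helmholtz_boundary_value_problem}, and testing with $\chi = \vartheta$ together with the Poincar\'e inequality \eqref{eq:Poincare} and the trace inequality \eqref{eq:trace} produces precisely the three data terms in \eqref{eq:S_bound_difference} --- and nothing else.

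Your replacement of this mechanism by a Fortin/inf-sup competitor argument cannot deliver the stated bound. First, whatever discrete competitor you build, the best you can extract from it is an estimate containing the best-approximation error $\| \tilde S - \Pi_h \tilde S \|_{L^p(\Omega)}$ (or $\inf_{T_h \in \Sigma^p_{{\rm adm},h}} \| \tilde S - T_h \|_{L^p(\Omega)}$), a term that does not appear on the right-hand side of \eqref{eq:S_bound_difference} and is \emph{not} controlled by the constraint-data mismatches: if the data happen to be piecewise constant, all three mismatch terms can be small while $\tilde S$ remains at an $O(h^s)$ distance from every discrete field. So your claim that the competitor's distance to $\tilde S$ is ``controlled solely by the mismatch of the constraint data'' is the step that fails, even in the Hilbert case $p=2$. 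Second, the canonical $RT_0$ interpolant is not defined, let alone bounded, on all of $\Sigma^p$ --- normal traces of fields with merely $T \in L^p$, $\div T \in L^p$ are not edgewise integrable --- so invoking the commuting diagram requires regularity of $\tilde S$ you have not established. Third, your closing step, ``a final application of the monotonicity estimate,'' is exactly the degenerate inequality you yourself flag as the main obstacle for $p < 2$, and you offer no way to remove the degeneracy; the paper's argument never needs duality-map monotonicity at all, because it works purely with norms of minimal-norm corrections. (For completeness: the paper's own reduction of $\| \tilde S - S_h \|$ to the set gap, the first line of \eqref{eq:affine_space_distance}, is itself stated without proof and is delicate; but granting it, the data-only right-hand side follows from the Helmholtz/Neumann computation, which is the idea missing from your proposal.)
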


\begin{proof}
  The system (\ref{eq:KKT_4}) implies that $S \in \Sigma^p$ is the closest point to $K ( u_\Omega , y_\Omega )$ out of the affine space
  \begin{equation}
    \Sigma_{\rm adm}^p = \{ T \in \Sigma^p : \div \: T = f \: \nabla y_\Omega - j^\prime (u_\Omega) \: \nabla u_\Omega \: , \:
    \left. T \cdot n \right|_{\partial \Omega} = \left. j (u_\Omega) \: n \right|_{\partial \Omega} \}
    \label{eq:affine_subspace_continuous}
  \end{equation}
  with respect to the $L^p (\Omega)$ norm. With the projections $P_h^0$ and $P_h^{0,b}$ defined above, we deduce from (\ref{eq:KKT_h}) that
  $S_h \in \Sigma_h^p$ is the closest point to $K ( u_{\Omega,h} , y_{\Omega,h} )$ out of
  \begin{equation}
    \begin{split}
      \Sigma_{{\rm adm},h}^p = \{ T_h \in \Sigma_h^p : \div \: T = P_h^0 f \: \nabla y_{\Omega,h} & - P_h^0 j^\prime (u_{\Omega,h}) \: \nabla j (u_{\Omega,h}) \: , \: \\
      & \left. T_h \cdot n \right|_{\partial \Omega} = \left. P_h^{0,b} j (u_{\Omega,h}) \: n \right|_{\partial \Omega} \}
    \end{split}
    \label{eq:affine_subspace_discrete}
  \end{equation}
  again with respect to the $L^p (\Omega)$ norm. Define $\tilde{S} \in \Sigma^p$ as the closest point to $K ( u_{\Omega,h} , y_{\Omega,h} )$ from
  $\Sigma_{\rm adm,h}^p$. The contraction property of the projection leads to
  \begin{equation}
    \| S - \tilde{S} \|_{L^p (\Omega)} \leq \| K ( u_\Omega , y_\Omega ) - K ( u_{\Omega,h} , y_{\Omega,h} ) \|_{L^p (\Omega)} \: .
    \label{eq:contraction}
  \end{equation}
  Both $\tilde{S}$ and $S_h$ are closest-point projections of $K ( u_{\Omega,h} , y_{\Omega,h} )$ but to the different affine spaces $\Sigma_{\rm adm}^p$ and
  $\Sigma_{{\rm adm},h}^p$, respectively. Therefore, we have
  \begin{equation}
    \begin{split}
      \| \tilde{S} - S_h \|_{L^p (\Omega)} & \leq \inf \{ \| \tilde{T} - T_h \|_{L^p (\Omega)} : \tilde{T} \in \Sigma_{\rm adm}^p \: , \: T_h \in \Sigma_{{\rm adm},h}^p \} \\
      & = \inf \{ \| T \|_{L^p (\Omega)} : \div \: T = f \: \nabla y_\Omega - P_h^0 f \: \nabla y_{\Omega,h} \\
      & \hspace{3cm} - ( j^\prime (u_\Omega) \: \nabla u_\Omega - P_h^0 j^\prime (u_{\Omega,h}) \: \nabla u_{\Omega,h} ) \: , \: \\
      & \hspace{3.5cm} T \cdot n |_{\partial \Omega} = ( j (u_\Omega) - P_h^{0,b} j (u_{\Omega,h}) ) n |_{\partial \Omega} \} \: .
    \end{split}
    \label{eq:affine_space_distance}
  \end{equation}
  The Helmholtz decomposition (for each row of $T$) in $L^p (\Omega;\R^{d \times d})$ (see \cite{FujMor:77}, \cite[Sect. III.1]{Gal:11}) states that the minimum in
  (\ref{eq:affine_space_distance}) is attained for $T = \nabla \vartheta$ with
  \begin{equation}
    \begin{split}
      ( \nabla \vartheta , \nabla \chi ) & = \langle j (u_\Omega) - P_h^{0,b} j (u_{\Omega,h}) , \chi \cdot n \rangle \\
      & \;\; - ( f \: \nabla y_\Omega - P_h^0 f \: \nabla y_{\Omega,h}
      - ( j^\prime (u_\Omega) \: \nabla u_\Omega,h - P_h^0 j^\prime (u_{\Omega,h}) \: \nabla u_{\Omega,h} ) , \chi )
    \end{split}
    \label{eq:Helmholtz_boundary_value_problem}
  \end{equation}
  for all $\chi \in W^{1,p^\ast} ( \Omega;\R^d )$. Inserting $\chi = \vartheta$ and using
  the Poincar\'e inequality
  \begin{equation}
    \| \chi \|_{L^{p^\ast} (\Omega)} \leq C_P \| \nabla \chi \|_{L^{p^\ast} (\Omega)} \mbox{ for all } \chi \in W^{1,p^\ast} (\Omega ; \R^d)
    \label{eq:Poincare}
  \end{equation}
  with $( \chi , e ) = 0$ for $e \in \R^d$ (cf. e.g. \cite[Ch. 3.3]{ErnGue:21a}) as well as the trace inequality
  \begin{equation}
    \begin{split}
      \| \chi \|_{W^{1/p,p^\ast} (\partial \Omega)} & \leq \tilde{C}_T \left( \| \chi \|_{L^{p^\ast} (\Omega)} + \| \nabla \chi \|_{L^{p^\ast} (\Omega)} \right) \\
      & \leq \tilde{C}_T ( 1 + C_P ) \| \nabla \chi \|_{L^{p^\ast} (\Omega)} =: C_T \| \nabla \chi \|_{L^{p^\ast} (\Omega)}
    \end{split}
    \label{eq:trace}
  \end{equation}
  (cf. \cite[Ch. 3.2]{ErnGue:21a}) gives
  \begin{equation}
    \begin{split}
      \| \nabla \vartheta \|_{L^p (\Omega)} & \leq C_T \| j (u_\Omega) - P_h^{0,b} j (u_{\Omega,h}) \|_{W^{-1/p,p} (\partial \Omega)} \\
      & \;\;\;\; + C_P \left( \| f \: \nabla y_\Omega - P_h^0 f \: \nabla y_{\Omega,h} \|_{L^p (\Omega)} \right. \\
      & \hspace{2cm} \left. + \| j^\prime (u_\Omega) \: \nabla u_\Omega - P_h^0 j^\prime (u_{\Omega,h}) \: \nabla u_{\Omega,h} \|_{L^p (\Omega)} \right) \: .
    \end{split}
    \label{eq:bound}
  \end{equation}
  Combining (\ref{eq:contraction}) and (\ref{eq:bound}) gives the desired inequality (\ref{eq:S_bound_difference}).
\end{proof}

Finally, the following result shows the closeness of the computed least mean functional to the exact one and thus confirms the significance of our computational results.

\begin{theorem}
  For $p \in ( 1 , 2 ]$, assume that $\Omega$ is such that the solutions of (\ref{eq:bvp}) and (\ref{eq:adjoint_bvp}) satisfy
  $\{ u_\Omega , y_\Omega \} \subset H^2 (\Omega) \cap W^{1,q} (\Omega)$ (again with $q = 2 p / (2 - p)$). Then, if $f \in H^1 (\Omega) \cap L^q (\Omega)$ and
  $j^\prime (u_\Omega) \in H^1 (\Omega) \cap L^q (\Omega)$, there is a constant $C$ such that
  \[
    \left| \eta_p (\Omega) - \eta_{p,h} (\Omega) \right| \leq C h \: .
  \]
  \label{theorem-eta_difference}
\end{theorem}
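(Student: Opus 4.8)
The plan is to control $|\eta_p(\Omega) - \eta_{p,h}(\Omega)|$ by the reverse triangle inequality and then dispatch the resulting pieces with the two preceding lemmas. Writing $K = K(u_\Omega,y_\Omega)$ and $K_h = K(u_{\Omega,h},y_{\Omega,h})$ for brevity, and recalling $\eta_p(\Omega) = \|S - K\|_{L^p(\Omega)}$, $\eta_{p,h}(\Omega) = \|S_h - K_h\|_{L^p(\Omega)}$, I would first estimate
\[
  |\eta_p(\Omega) - \eta_{p,h}(\Omega)| \le \|(S - K) - (S_h - K_h)\|_{L^p(\Omega)} \le \|S - S_h\|_{L^p(\Omega)} + \|K - K_h\|_{L^p(\Omega)}.
\]
Lemma~\ref{lemma-S_bound_difference} bounds $\|S - S_h\|_{L^p(\Omega)}$ by $\|K - K_h\|_{L^p(\Omega)}$ plus three projection-error terms, so the whole estimate reduces to proving that (i) $\|K - K_h\|_{L^p(\Omega)} = O(h)$ and (ii) each of the three projection terms is $O(h)$.

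For (i) I would invoke Lemma~\ref{lemma-K_bound_difference}, which bounds $\|K - K_h\|_{L^p(\Omega)}$ by $(2d)^{1/2}$ times $\|\nabla u_\Omega\|_{L^q(\Omega)}\|\nabla(y_\Omega - y_{\Omega,h})\|_{L^2(\Omega)} + \|\nabla y_\Omega\|_{L^q(\Omega)}\|\nabla(u_\Omega - u_{\Omega,h})\|_{L^2(\Omega)}$. The $W^{1,q}$ regularity makes $\|\nabla u_\Omega\|_{L^q}$ and $\|\nabla y_\Omega\|_{L^q}$ finite, while the $H^2$ regularity together with C\'ea's lemma and the standard piecewise-linear interpolation estimate gives $\|\nabla(u_\Omega - u_{\Omega,h})\|_{L^2} \le C h\|u_\Omega\|_{H^2}$ and likewise for $y_\Omega$. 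This is exactly where $q = 2p/(2-p)$, equivalently $1/q + 1/2 = 1/p$, is used: the $L^q$--$L^2$ pairing lands in $L^p$, and the $L^2$ factor carries the $O(h)$.

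The core of the argument is (ii). For the volume terms I would split, e.g.,
\[
  f\,\nabla y_\Omega - P_h^0(f\,\nabla y_{\Omega,h}) = \bigl(f\,\nabla y_\Omega - P_h^0(f\,\nabla y_\Omega)\bigr) + P_h^0\bigl(f\,\nabla(y_\Omega - y_{\Omega,h})\bigr).
\]
For the first summand I would show $f\,\nabla y_\Omega \in W^{1,p}(\Omega)$ using $\nabla(f\,\nabla y_\Omega) = \nabla f\otimes\nabla y_\Omega + f\,\nabla^2 y_\Omega$, where $\nabla f\otimes\nabla y_\Omega \in L^p$ (pairing $L^2$ with $L^q$) and $f\,\nabla^2 y_\Omega \in L^p$ (pairing $L^q$ with $L^2$), again by $1/q + 1/2 = 1/p$; the Bramble--Hilbert/Poincar\'e estimate for the $L^p$-projection onto piecewise constants then yields the bound $Ch\,|f\,\nabla y_\Omega|_{W^{1,p}(\Omega)}$. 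For the second summand I would use that $P_h^0$ is an $L^p$-contraction together with H\"older, so that $\|P_h^0(f\,\nabla(y_\Omega - y_{\Omega,h}))\|_{L^p} \le \|f\|_{L^q}\|\nabla(y_\Omega - y_{\Omega,h})\|_{L^2} = O(h)$. The term built from $j^\prime(u_\Omega)\,\nabla u_\Omega$ is treated identically, using $j^\prime(u_\Omega) \in H^1\cap L^q$ and $u_\Omega \in H^2 \cap W^{1,q}$, except that its difference additionally produces the factor $j^\prime(u_\Omega) - j^\prime(u_{\Omega,h})$.

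The main obstacle is precisely this nonlinear difference term $\|(j^\prime(u_\Omega) - j^\prime(u_{\Omega,h}))\,\nabla u_{\Omega,h}\|_{L^p}$ (absent in all the examples, where $j^\prime$ is constant), together with the boundary contribution measured in the negative-order space $W^{-1/p,p}(\partial\Omega)$. For the former I would assume $j^\prime$ Lipschitz, bound it by $\|j^\prime(u_\Omega) - j^\prime(u_{\Omega,h})\|_{L^q}\|\nabla u_{\Omega,h}\|_{L^2}$, and then use $\|u_\Omega - u_{\Omega,h}\|_{L^q} \le C_q\|u_\Omega - u_{\Omega,h}\|_{H^1} \le C_q h\|u_\Omega\|_{H^2}$ via the Sobolev embedding $H^1\hookrightarrow L^q$ in two dimensions (valid for every $q < \infty$, i.e. $p > 1$), with $\|\nabla u_{\Omega,h}\|_{L^2}$ uniformly bounded. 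For the boundary term I would argue by duality and trace: split off $P_h^{0,b}j(u_\Omega)$, estimate the approximation of the smooth trace in the negative norm (gaining order $1/p$, so that it is at worst $O(h)$), and control the remaining finite-element part by a trace inequality applied to $j(u_\Omega) - j(u_{\Omega,h})$. Assembling all the $O(h)$ contributions gives the claim; the delicate point throughout is to check that no single contribution degrades below first order, which again hinges on the exponent balance $1/q + 1/2 = 1/p$ and on the $H^2$-based $L^2$ and $H^1$ finite-element estimates.
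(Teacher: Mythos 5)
Your proposal is correct and follows essentially the same route as the paper's proof: the reverse triangle inequality reducing everything to $\|S-S_h\|_{L^p(\Omega)}+\|K-K_h\|_{L^p(\Omega)}$, then Lemma~\ref{lemma-K_bound_difference} and Lemma~\ref{lemma-S_bound_difference} combined with the exponent balance $1/q+1/2=1/p$, the $H^2$-based $O(h)$ Galerkin estimates, $O(h)$ bounds for the piecewise-constant projection errors, and the negative-norm boundary estimate $\|j(u_\Omega)-P_h^{0,b}j(u_{\Omega,h})\|_{W^{-1/p,p}(\partial\Omega)}\le Ch\,\|j(u_\Omega)\|_{W^{1-1/p,p}(\partial\Omega)}$. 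The only divergences are cosmetic: you apply $P_h^0$ to the product $f\,\nabla y_{\Omega,h}$ (where your linearity/``contraction'' step should be read with $P_h^0$ as the element-wise mean, which is linear and $L^p$-nonexpansive by Jensen, whereas the literal $L^p$-closest-point projection is nonlinear for $p\neq 2$) while the paper instead pairs $(f-P_h^0 f)$ with $\nabla y_\Omega$ in $L^2\times L^q$, and you make explicit, via a Lipschitz assumption on $j'$ and the embedding $H^1\hookrightarrow L^q$, the treatment of the term $\|j'(u_\Omega)-j'(u_{\Omega,h})\|\,\|\nabla u_\Omega\|_{L^q}$ that the paper absorbs into $Ch$ without comment.
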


\begin{proof}
  Under our assumptions, Lemma \ref{lemma-K_bound_difference} gives
  \begin{equation}
    \begin{split}
      \| K ( u_\Omega , y_\Omega ) & - K ( u_{\Omega,h} , y_{\Omega,h} ) \|_{L^p (\Omega)} \\
      & \leq C h \left( \| \nabla u_\Omega \|_{L^q (\Omega)} | \nabla y_\Omega |_{H^1 (\Omega)}
      + \| \nabla y_\Omega \|_{L^q (\Omega)} | \nabla u_\Omega |_{H^1 (\Omega)} \right) \: ,
    \end{split}
    \label{eq:K_bound_difference_special}
  \end{equation}
  where we used the standard estimate for the Galerkin approximation (cf. e.g. \cite[Sect. 26.3]{ErnGue:21b}). Moreover, Lemma \ref{lemma-S_bound_difference}
  implies
  \begin{align*}
    \| S & - S_h \|_{L^p (\Omega)} \leq \| K ( u_\Omega , y_\Omega ) - K ( u_{\Omega,h} , y_{\Omega,h} ) \|_{L^p (\Omega)} \\
    & + C \left( \| f - P_h^0 f \|_{L^2 (\Omega)} \| \nabla y_\Omega \|_{L^q (\Omega)}
    + \| f \|_{L^q (\Omega)} \| \nabla y_\Omega - \nabla y_{\Omega_h} \|_{L^2 (\Omega)} \phantom{\int} \right.\\
    & + \| j^\prime (u_\Omega) - P_h^0 j^\prime (u_\Omega) \|_{L^2} \: \| \nabla u_\Omega \|_{L^q}
    + \| j^\prime (u_\Omega) - j^\prime (u_{\Omega,h}) \|_{L^2} \: \| \nabla u_\Omega \|_{L^q} \\
    & \phantom{\int} + \left. \| j^\prime (u_\Omega) \|_{L^q} \| \nabla u_\Omega - \nabla u_{\Omega_h} \|_{L^2}
    + \| j (u_\Omega) - P_h^{0,b} j (u_{\Omega,h}) \|_{W^{-1/p,p} (\partial \Omega)} \right) \\
    \leq & \: \| K ( u_\Omega , y_\Omega ) - K ( u_{\Omega,h} , y_{\Omega,h} ) \|_{L^p (\Omega)} \\
    & + C h \left( | f |_{H^1 (\Omega)} \| \nabla y_\Omega \|_{L^q (\Omega)} + \| f \|_{L^q (\Omega)} | \nabla y_\Omega |_{H^1 (\Omega)} 
    + | j^\prime (u_\Omega) |_{H^1 (\Omega)} \: \| \nabla u_\Omega \|_{L^q (\Omega)} \phantom{\int} \right. \\
    & \hspace{1cm} \left. \phantom{\int} 
    + \| j^\prime (u_\Omega) \|_{L^q (\Omega)} | \nabla u_\Omega |_{H^1 (\Omega)} + \| j (u_\Omega) \|_{W^{1-1/p,p} (\partial \Omega)} \right) \: .
  \end{align*}
  The statement of Theorem \ref{theorem-eta_difference} follows now from
  \[
    | \eta_p (\Omega) - \eta_{p,h} (\Omega) | \leq \| S - S_h \|_{L^p (\Omega)} + \| K ( u_\Omega , y_\Omega ) - K ( u_{\Omega,h} , y_{\Omega,h} ) \|_{L^p (\Omega)} \: .
  \]
\end{proof}

\begin{remark}
  The assumptions in Theorem \ref{theorem-eta_difference} are fulfilled in our examples since $\Omega$ is either convex or has a smooth boundary.
\end{remark}

\bibliography{../../../biblio/msc_articles,../../../biblio/msc_books}
\bibliographystyle{siamplain}

\end{document}